\colorlet{darkishRed}{red!80!black}
\colorlet{darkishBlue}{blue!60!black}
\colorlet{darkishGreen}{green!60!black}
\newenvironment{customthm}[1]
  {\innercustomthm}
  {\endinnercustomthm}
\newtheorem{theorem}{Theorem}[section]
\newtheorem{mainresult}{Theorem}
\newtheorem{proposition}[theorem]{Proposition}
\newtheorem{cor}[theorem]{Corollary}
\newtheorem{lemma}[theorem]{Lemma}
\theoremstyle{definition}
\colorlet{darkishGreen}{green!60!black}
\newcommand{\cX}{\mathcal{X}}
\newcommand{\N}{\mathbb{N}}
\newcommand{\lno}{sensitive order}
\newcommand{\nao}[1][]{%
\ifthenelse{\equal{#1}{}}{\trianglelefteq_T}{\trianglelefteq_{T_{#1}}\!}%
}
\newcommand{\dc}[1]{\lceil #1\rceil}
\newcommand{\uc}[1]{\lfloor #1\rfloor}
\newcommand{\dS}{\mathstrut\mkern2.5mu S\mkern-13mu\raise1.4ex%
\hbox{$\leftrightarrow$}}
\def\lowfwd #1#2#3{{\mathop{\kern0pt #1}\limits^{\kern#2pt\raise.#3ex
\vbox to 0pt{\hbox{$\scriptscriptstyle\rightarrow$}\vss}}}}
\def\lowbkwd #1#2#3{{\mathop{\kern0pt #1}\limits^{\kern#2pt\raise.#3ex
\vbox to 0pt{\hbox{$\scriptscriptstyle\leftarrow$}\vss}}}}
\def\ve{\kern-1.5pt\lowfwd e{1.5}2\kern-1pt}
\def\ev{\kern-1pt\lowbkwd e{0.5}2\kern-1pt}
\def\vf{\kern-2pt\lowfwd f{2.5}2\kern-1pt}
\def\Tv{\lowbkwd T{0.3}1}
\def\Dv{\lowbkwd D{0.3}1}
\begin{document}

\title[Ends of digraphs]{Ends of digraphs III: normal arborescences} 

\author{Carl B\"{u}rger}
\author{Ruben Melcher}
\address{University of Hamburg, Department of Mathematics, Bundesstraße 55 (Geomatikum), 20146 Hamburg, Germany}
\email{carl.buerger@uni-hamburg.de, ruben.melcher@uni-hamburg.de}

\keywords{infinite digraph; end; arborescence; normal spanning tree; normal spanning arborescence; depth-first search; end space}%
\subjclass[2010]{05C05, 05C20, 05C63}

\maketitle

\begin{abstract}
In a series of three papers we develop an end space theory for digraphs. Here in the third paper we introduce a concept of depth-first search trees in infinite digraphs, which we call \emph{normal spanning arborescences}. 

We show that normal spanning arborescences are end-faithful: every end of the digraph is represented by exactly one ray in the normal spanning arborescence that starts from the root. 
We further show that this bijection extends to a homeomorphism between the end space of a digraph $D$, which may include limit edges between ends, and the end space of any normal arborescence with limit edges induced from $D$.
Finally we prove a Jung-type criterion for the existence of normal spanning arborescences.
\end{abstract}

\section{Introduction}
\noindent Ends of graphs are one of the most important concepts for the study of infinite graphs.
In a series of three papers we develop an end space theory for digraphs.
See~\cite{EndsOfDigraphsI} for a comprehensive introduction to the entire series of three papers  (\cite{EndsOfDigraphsI}, \cite{EndsOfDigraphsII} and this paper) and a brief overview of all our results.

Depth-first search trees are a standard tool in finite graph and digraph theory. These trees arise from an algorithm on a graph or digraph called \emph{depth-first search}. Starting from a fixed vertex, the `root', the algorithm moves along the edges, going to a vertex not visited yet whenever this is possible, and going back otherwise. Depth-first search stops when all vertices have been visited, and the trees defined by the traversed edges are called \emph{depth-first search} trees. 

For connected finite graphs, the depth-first search trees are precisely the normal spanning trees. Here, a rooted tree $T\subseteq G$  is \emph{normal} in $G$ if the endvertices of every $T$-path in $G$ are comparable in the tree-order of $T$. (A \emph{$T$-path} in $G$ is a non-trivial path that meets $T$ exactly in its endvertices.) Normal spanning trees generalise depth-first search trees, since they are also defined for infinite graphs; they are perhaps the single most important structural tool in infinite graph theory~\cite{DiestelBook5}. 

In this third paper of our series we introduce and study normal spanning arborescences. These are generalisations of depth-first search trees to infinite digraphs that promise to be as powerful for a structural analysis of digraphs as normal spanning trees are for graphs, both from a combinatorial and a topological point of view.

An \emph{arborescence} is a rooted oriented tree $T$ that contains for every vertex \mbox{$v \in V(T)$} a directed path from the root to $v$. 
The vertices of any arborescence are partially ordered as $v \leq_T w$ if $T$ contains a directed path from $v$ to $w$. We write $\uc{v}_T$ for the up-closure of $v$ in $T$.

Consider a finite digraph $D$ together with a spanning depth-first search tree $T\subseteq D$.
If $vw$ is an edge of $D$ between $\le_T$-incomparable vertices of $T$, then $w$ is visited at an earlier stage of the depth-first search than $v$.\footnote{Indeed, if $v$ was visited before $w$, the algorithm would have traversed the edge $vw$ rather than backtracking from $v$, which it must have done since $v$ and $w$ are incomparable. Note that all the visits to $v$ happen while the algorithm searches $\uc{v}_T$, and likewise for $w$, so visiting `before' and `after' are well-defined for incomparable vertices.} Together with all such edges, $T$ forms an acyclic subdigraph of $D$~\cite{cormen2009book}.\footnote{Indeed, any cycle would, but cannot, lie in the up-closure of its first-visited vertex.} 
\begin{figure}[h]
\centering
\def\svgwidth{0.35\columnwidth}
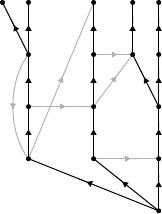
\caption{A depth-first search arborescence visiting vertices from right to left.} 
\label{fig:NSA}
\end{figure} 

Let us use this property of depth-first search trees in finite digraphs as the definition of our infinite analogue, i.e., as the defining property for `normal' arborescences in infinite digraphs. More precisely, consider a (possibly infinite) digraph $D$ and an arborescence $T\subseteq D$, not necessarily spanning. A \emph{$T$-path} in $D$ is a non-trivial directed path that meets $T$ exactly in its endvertices. The \emph{normal assistant} of $T$ in $D$ is the auxiliary digraph $H$ that is obtained from $T$ by adding an edge $vw$ for every two $\le_T$-incomparable vertices $v,w \in V(T)$ for which there is a $T$-path from $\uc{v}_T$ to $\uc{w}_T$ in $D$, regardless of whether $D$ contains such an edge.
The arborescence $T$ is \emph{normal} in $D$ if the normal assistant of $T$ in $D$ is acyclic. It is straightforward to check that this indeed generalises depth-first search trees in that  for finite $D$ a spanning arborescence $T$ of $D$ is normal in $D$ if and only if $T$ defines a depth-first search tree; see Corollary~\ref{corollary: finite dfs trees are NSAs}.

One aspect of why normal spanning trees of infinite undirected graphs are so useful is that they are end-faithful. A spanning tree $T$ of a graph $G$ is \emph{end-faithful} if the map that assigns to every end of $T$ the end of $G$ that contains it as a subset (of rays) is bijective, see \cite{DiestelBook5}. Equivalently $T$ is end-faithful if every end of $G$ is represented by a unique ray in $T$ that starts from a fixed root. Our first theorem will be an analogue of this for normal arborescences, so let us recall the definition of ends of digraphs from~\cite{EndsOfDigraphsI}.

A \emph{directed ray} is an infinite directed path that has a first vertex (but no last vertex). The directed subrays of a directed ray are its \emph{tails}. For the sake of readability we shall omit the word `directed' in  `directed path' and `directed ray' if there is no danger of confusion. We call a ray in a digraph \emph{solid} in $D$ if it has a tail in some strong component of $D-X$ for every finite vertex set $X\subseteq V(D)$. We call two solid rays in a digraph $D$ \emph{equivalent} if for every finite vertex set $X\subseteq V(D)$ they have a tail in the same strong component of $D-X$. The equivalence classes of this equivalence relation are the \emph{ends} of $D$. For a finite vertex set $X\subseteq V(D)$ and an end $\omega$ of $D$ we write $C(X,\omega)$ for the unique strong component of $D-X$ that contains a tail of every ray that represents $\omega$; the end $\omega$ is then said to \emph{live} in that strong component. The set of ends of $D$ is denoted by $\Omega(D)$.

Let $T\subseteq D$ be a spanning arborescence of a digraph $D$. We say that $T$ is \emph{end-faithful} if every end of $D$ is represented by a unique ray in $T$ starting from the root of~$T$. (Note that, conversely, rays in $T$ will only represent ends of $D$ if they are solid in $D$.) Here is our first main result:

\begin{customthm}{\ref{thm: normal arborescences are end-faithful}}
Every normal spanning arborescence of a digraph is end-faithful.
\end{customthm}
\noindent In fact we will prove a localised version of this for  normal arborescences in $D$ that are not  necessarily spanning.

The end space of any normal spanning tree $T$ of an undirected graph $G$ coincides with the end space of $G$, not only combinatorially but also topologically. Indeed, the map that assigns to every end of $T$ the end of $G$ that contains it as a subset is a homeomorphism between the end space of $T$ and that of $G$, see \cite{DiestelBook5}. Hence, in order to understand the end space of $G$ one just needs to understand the simple structure of the tree $T$.

We also have an analogue of this for digraphs and their normal arborescences. To state this, let us recall the notion of limit edges of a digraph $D$.

For two distinct ends $\omega $ and $\eta$ of $D$, we call the pair  $(\omega,\eta)$ a \emph{limit edge} from $\omega$~to~$\eta$ if $D$ has an edge from $C(X,\omega)$ to $C(X,\eta)$ for every finite vertex set $X$  for which $\omega$ and $\eta$ live in distinct strong components of $D-X$. Similarly, for a vertex $v\in V(D)$ and an end $\omega$ of $D$ we call the pair $(v,\omega)$ a \emph{limit edge} \emph{from} $v$ \emph{to} $\omega$ if $D$ has an edge from $v$ to $C(X,\omega)$ for every finite vertex set $X\subseteq V(D)$ with $v \not\in  C(X, \omega)$. And we call the pair $(\omega,v)$ a \emph{limit edge} \emph{from} $\omega$ \emph{to} $v$ if $D$ has an edge from $C(X,\omega)$ to $v$ for every finite vertex set $X\subseteq V(D)$ with $v \not\in C(X, \omega)$. 
The digraph $D$, its ends, and its limit edges together form a topological space $|D|$, in which the edges are copies of the real interval $[0,1]$; see~\cite{EndsOfDigraphsII}.

The \emph{horizon} of a digraph $D$ is the subspace of $|D|$ formed by the ends of $D$ and all the limit edges between them.
Arborescences do not themselves have ends or limit edges, but there is a natural way to endow an arborescence $T$ in a digraph $D$ with a meaningful horizon. The \emph{solidification} of an arborescence $T \subseteq D$, or of its normal assistant $H$ in $D$, is obtained from $T$ or $H$, respectively, by adding all the edges $wv$ with $vw\in E(T)$. Note that all the rays of $T$ are solid in its solidification and thus represent ends there. Let us define the \emph{horizon} of $T$ as the horizon of the solidification of its normal assistant.

Recall that the digraphs $D$ that are compactified by $|D|$ are precisely the \emph{solid} ones, those such that $D-X$ has only finitely many strong components for every finite vertex set $X \subseteq V(D)$~\cite{EndsOfDigraphsII}. Let $T$ be a normal spanning arborescence of $D$, with root $r$, say. By Theorem~\ref{thm: normal arborescences are end-faithful}, there exists a well-defined map~$\psi$ that sends every end $\omega$ of $D$ to the end of the solidification $\overline{T}$ of $T$ represented by the unique ray $R\subseteq T$ starting from $r$ that represents $\omega$ in $D$. This map $\psi$ is clearly injective. If $D$ is solid, every ray in $T$ represents an end of $D$, so $\psi$ is also surjective. Let~$\zeta$ denote the map from the set of ends of $\overline{T}$ to that of the solidification $\overline{H}$ of the normal assistant $H$ of $T$ in $D$ that assigns to every end of $\overline{T}$ the end of $\overline{H}$ that contains it as a subset (of rays).  This is always bijective, see Lemma~\ref{lemma: Zeta is immer bij.}.  Note that $\overline{H}$, unlike $\overline{T}$, can have limit edges. We say that $T$ \emph{reflects the horizon} of $D$ if the map $\zeta \circ \psi \colon \Omega(D)\to \Omega(\overline{H})$ extends to a homeomorphism from the horizon of $D$ to that of $\overline{H}$.

As our second main result we prove that normal spanning arborescences of solid digraphs reflect the horizon of the digraph they span:
\begin{customthm}{\ref{thm: normal arborescence and end space}}
Every normal spanning arborescence of a solid digraph reflects its horizon. 
\end{customthm}
Not every connected graph has a normal spanning tree; for example, uncountable complete graphs have none. Thus it is not surprising that there are also strongly connected digraphs without normal spanning arborescences---such as any digraph obtained from an uncountable complete graph by replacing every edge by its two orientations as separate directed edges.

Jung~\cite{jung69} characterised the connected graphs with a normal spanning tree in terms of dispersed sets. A set $U\subseteq V(G)$ of vertices of a graph $G$ is \emph{dispersed} if there is no comb in $G$ with all its teeth in $U$. Recall that a \emph{comb} is the union of a ray $R$  with infinitely many disjoint finite paths, possibly trivial, that have precisely their first vertex on~$R$. The last vertices of those paths are the \emph{teeth} of this comb, see~\cite{DiestelBook5}. Jung proved that a connected graph has a normal spanning tree if and only if its vertex set is a countable union of dispersed sets.

Translating this to digraphs, a \emph{directed comb} is the union of a directed ray with infinitely many disjoint finite paths (possibly trivial) that have precisely their first vertex on~$R$. Hence the underlying graph of a directed comb is an undirected comb. The \emph{teeth} of a directed comb are the teeth of the underlying comb.
We call a set $U\subseteq V(D)$ of vertices of a digraph $D$ \emph{dispersed} if there is no directed comb in $D$ with all its teeth in $U$. For two vertices $v,w\in V(D)$, we say that $v$ \emph{can reach} $w$ if $D$ contains a path from $v$ to $w$. 
\begin{customthm}{\ref{thm: normal arborescence existence}}
Let $D$ be any digraph and suppose that $r\in V(D)$ can reach all the vertices of $D$. If $V(D)$ is a countable union of dispersed sets, then $D$ has a normal spanning arborescence rooted in~$r$.
\end{customthm}

\noindent In fact we will prove a slightly stronger version of this where we show how to find a normal arborescence in $D$ that contains a given set of vertices of $D$.

In an undirected graph, the levels of any normal spanning tree are dispersed, so the forward implication in Jung's characterisation is easy. Theorem~\ref{thm: normal arborescence existence} implies the harder backward implication when applied to the digraph obtained from the graph by replacing every edge by its two orientations as separate directed edges.

The easy forward implication in Jung's theorem does not have a directed analogue, since the converse implication in Theorem~\ref{thm: normal arborescence existence} may fail (see Section~\ref{section: existence}). However, the converse of Theorem~\ref{thm: normal arborescence existence} does hold if the digraph $D$ is solid.

This paper is organised as follows.  We provide the tools and terminology that we use throughout this paper in Section~\ref{section: tools and terminology}. Then in Section~\ref{section: Normal arborescences} we introduce normal arborescences and provide some basic lemmas that we need for the proofs of our main results. In Section~\ref{section: end-faithful}, we show that normal spanning arborescences are end-faithful, Theorem~\ref{thm: normal arborescences are end-faithful}. In Section~\ref{section: end space}, we prove that normal spanning arborescences reflect the horizon, Theorem~\ref{thm: normal arborescence and end space}. Finally, we prove our existence criterion for normal arborescences in digraphs, Theorem~\ref{thm: normal arborescence existence}, in Section~\ref{section: existence}.

\section{Tools and terminology}\label{section: tools and terminology} 
\noindent Any graph-theoretic notation not explained here can be found in Diestel's textbook~\cite{DiestelBook5}. For the sake of readability, we sometimes omit curly brackets of singletons, i.e., we write $x$ instead of $\{x\}$ for a set~$x$.
Furthermore, we omit the word `directed'---for example in `directed path'---if there is no danger of confusion.

Throughout this paper $D$ is an infinite digraph without multi-edges and without loops, but which may have inversely directed edges between distinct vertices. For a digraph $D$, we write $V(D)$ for the vertex set of $D$, we write $E(D)$ for the edge set of $D$ and $\cX(D)$ for the set of finite vertex sets of $D$. We write edges as ordered pairs $(v,w)$ of vertices $v,w\in V(D)$, and  we usually write $(v,w)$ simply as $vw$.   The vertex $v$ is the \emph{tail} of $vw$ and the vertex $w$ its \emph{head}. The \emph{reverse} of an edge $vw$ is the edge $wv$. More generally, the \emph{reverse} of a digraph $D$ is the digraph on $V(D)$ where we replace every edge of $D$ by its reverse, i.e., the reverse of $D$ has the edge set $\{\, vw \mid wv \in E(D)\,\}$. We write $\Dv$ for the reverse of a digraph $D$.
A \emph{symmetric ray} is a digraph obtained from an undirected ray by replacing each of its edges by its two orientations as separate directed edges. Hence the reverse of a symmetric ray is a symmetric ray.

The directed subrays of a ray are its \emph{tails}. Call a ray \emph{solid} in $D$ if it has a tail in some strong component of $D-X$ for every finite vertex set $X\subseteq V(D)$.
Two solid rays in $D$ are \emph{equivalent}, if they have a tail in the same strong component of $D-X$ for every finite vertex set $X \subseteq V(D)$. We call the equivalence classes of this relation the \emph{ends} of $D$ and we write $\Omega(D)$ for the set of ends of $D$.

Similarly, the reverse subrays of a reverse ray are its \emph{tails}. We call a reverse ray \emph{solid} in $D$ if it has a tail in some strong component of $D-X$ for every finite vertex set $X\subseteq V(D)$. With a slight abuse of notation, we say that a reverse ray $R$ \emph{represents} an end $\omega$ if there is a solid ray $R'$ in $D$ that represents $\omega$ such that $R$ and $R'$ have a tail in the same strong component of $D-X$ for every finite vertex set $X\subseteq V(D)$.

Given sets $A, B\subseteq V(D)$ of vertices a \emph{path from $A$ to $B$}, or \emph{$A$--$B$} path is a path that meets $A$ precisely in its first vertex and $B$ precisely in its last vertex. We say that a vertex $v$ can \emph{reach} a vertex $w$ in $D$ and $w$ can be \emph{reached} from $v$ in $D$ if there is a $v$--$w$ path in $D$. A non-trivial path $P$ is an $A$-path for a set of vertices $A$ if $P$ has both its endvertices but none of its inner vertices  in $A$. A set $W$ of vertices is \emph{strongly connected} in $D$ if every vertex of $W$ can reach every other vertex of $W$ in $D[W]$.

A vertex set $Y \subseteq V(D)$ \emph{separates} $A$ and $B$ in $D$ with $A,B\subseteq V(D)$ if every $A$--$B$ path meets $Y$, or if every $B$--$A$ path meets $Y$. For two vertices $v$ and $w$ of $D$ we say that $Y\subseteq V(D) \setminus \{v,w\}$ \emph{separates} $v$ and $w$ in $D$, if it separates $\{v\}$ and $\{w\}$ in~$D$.

For a finite vertex set $X \subseteq V(D)$ and a strong component $C$ of $D-X$ an end $\omega$ is said to \emph{live in} $C$ if one (equivalent every) solid ray in $D$ that represents $\omega$ has a tail in $C$. We write $C(X,\omega)$ for the strong component of $D-X$ in which $\omega$ lives. For two ends $\omega$ and $\eta$ of $D$ a finite set $X \subseteq V(D)$ is said to \emph{separate} $\omega$ and $\eta$ if $C(X, \omega) \neq C(X, \eta )$, i.e., if $\omega$ and $\eta$ live in distinct strong components  of $D-X$.

We say that a digraph is acyclic if it contains no directed cycle as a subdigraph. The vertices of any acyclic digraph $D$ are  partially ordered by $v\le_D w$ if $D$ contains a path from $v$ to $w$. 

An \emph{arborescence} is a rooted oriented tree that contains for every vertex $v\in V(T)$ a directed path from its root to $v$. Note that arborescences $T$ are acyclic and that $\le_T$ coincides with the tree-order of the undirected tree underlying $T$. For vertices $v\in V(T)$, we write $\uc{v}_T$ for the up-closure and $\dc{v}_T$ for the down-closure of $v$ with regard to $\le_T$. The $n$th \emph{level} of $T$ is the $n$th level of the undirected tree underlying~$T$. 

A \emph{directed comb} is the union of a ray with infinitely many finite disjoint paths (possibly trivial) that have precisely their first vertex on $R$. Hence the undirected graph underlying a directed comb is an undirected comb. The \emph{teeth} of a directed comb are the teeth of the underlying undirected comb. The ray from the definition of a directed comb is the \emph{spine} of the directed comb.

Let $H$ be any fixed digraph. A \emph{subdivision} of $H$ is any digraph that is obtained from $H$ by replacing every edge $vw$ of $H$ by a path $P_{vw}$ with first vertex $v$ and last vertex $w$ so that the paths $P_{vw}$ are  internally disjoint and do not meet~$V(H)\setminus \{v,w\}$. We call the paths $P_{vw}$ \emph{subdividing} paths. If $D$ is a subdivision of $H$, then the original vertices of $H$ are the \emph{branch vertices} of $D$ and the new vertices its \emph{subdividing vertices}. 

\emph{An inflated} $H$ is any digraph that arises from a subdivision $H'$ of $H$ as follows. Replace every branch vertex $v$ of $H'$ by a strongly connected digraph $H_v$ so that the $H_v$ are disjoint and do not meet any subdividing vertex; here replacing means that we first delete $v$ from $H'$ and then add $V(H_v)$ to the vertex set and $E(H_v)$ to the edge set. Then replace every subdividing path $P_{vw}$ that starts in $v$ and ends in $w$ by an $H_v$--$H_w$ path that coincides with $P_{vw}$ on inner vertices. We call the vertex sets $V(H_v)$ the \emph{branch sets} of the inflated $H$. A \emph{necklace} is an inflated symmetric ray with finite branch sets; the branch sets of a necklace are its \emph{beads}. With a slight abuse of notation, we say that a necklace $N\subseteq D$ \emph{represents} an end $\omega$ of $D$ if one (equivalently every) ray in $N$ represents~$\omega$. Given a set $U$ of vertices in a digraph $D$, a necklace $N\subseteq D$ is \emph{attached to} $U$ if infinitely many of the branch sets of $N$ contain a vertex  from $U$.

For two distinct ends $\omega, \eta\in \Omega(D)$, we call the pair $(\omega,\eta)$ a \emph{limit edge} from $\omega$ to~$\eta$, if $D$ has an edge from $C(X,\omega)$ to $C(X,\eta)$ for every finite vertex set $X\subseteq V(D)$ that separates $\omega$ and $\eta$. For a vertex $v\in V(D)$ and an end $\omega\in \Omega(D)$ we call the pair $(v,\omega)$ a \emph{limit edge} \emph{from} $v$ \emph{to} $\omega$ if $D$ has an edge from $v$ to $C(X,\omega)$ for every finite vertex set $X\subseteq V(D)$ with $v \not\in  C(X, \omega)$. Similarly, we call the pair $(\omega,v)$ a \emph{limit edge} \emph{from} $\omega$ \emph{to} $v$ if $D$ has an edge from $C(X,\omega)$ to $v$ for every finite vertex set $X\subseteq V(D)$ with $v \not\in C(X, \omega)$. We write $\Lambda(D)$ for the set of all the limit edges of $D$. 
As we do for `ordinary' edges of a digraph, we will suppress the brackets and the comma in our notation of limit edges. For example we write $\omega\eta$ instead of $(\omega,\eta)$ for a limit edge between ends $\omega$ and $\eta$. For limit edges we need the following proposition from the first paper of this series~\cite[Proposition~5.2]{EndsOfDigraphsI}.

\begin{proposition}\label{proposition: char limit edge type II}
For a digraph $D$, a vertex $v$ and an end $\omega$ of $D$ the following assertions are equivalent:
\begin{enumerate}
    \item $D$ has a limit edge from $v$ to $\omega$;
    \item there is a necklace $N\subseteq D$ that represents $\omega$ such that $v$ sends an edge to every bead of $N$.
\end{enumerate}
\end{proposition}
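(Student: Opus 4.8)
The plan is to prove the equivalence of the two assertions by establishing each implication separately, with the construction of the necklace in the direction $(1)\Rightarrow(2)$ being the substantive part.

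For the easy direction $(2)\Rightarrow(1)$, suppose we are given a necklace $N\subseteq D$ representing $\omega$ such that $v$ sends an edge to every bead of $N$. Let $X\subseteq V(D)$ be any finite vertex set with $v\notin C(X,\omega)$. Since $N$ represents $\omega$, its rays have a tail in $C(X,\omega)$, so all but finitely many beads of $N$ lie in $C(X,\omega)$ (the beads are finite and form an increasing sequence along the necklace, so cofinitely many of them avoid the finite set $X$ and lie in the strong component where $\omega$ lives). Pick any such bead $B\subseteq C(X,\omega)$; by hypothesis $v$ sends an edge to some vertex of $B$, and that vertex lies in $C(X,\omega)$. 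Hence $D$ has an edge from $v$ to $C(X,\omega)$, as required for $(v,\omega)$ to be a limit edge.

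For the main direction $(1)\Rightarrow(2)$, I would build the necklace greedily by exhausting $V(D)$ with a sequence of finite sets. Fix an enumeration $v_0,v_1,\dots$ of $V(D)$ and set $X_n=\{v_0,\dots,v_n\}$; we may assume $v\notin C(X_n,\omega)$ for all $n$ by discarding finitely many initial terms. The limit-edge hypothesis gives, for each $n$, an edge from $v$ into $C(X_n,\omega)$, landing on some vertex $u_n\in C(X_n,\omega)$. The idea is to thread these target vertices onto a single solid ray representing $\omega$ and to place each $u_n$ in its own bead. Concretely, I would recursively choose finite strongly connected beads $B_0,B_1,\dots$ together with connecting paths, maintaining that each $B_k$ lies in $C(X_{n_k},\omega)$ for a strictly increasing sequence $n_k$, that $B_k$ contains a vertex $u_{n_k}$ receiving an edge from $v$, and that consecutive beads are joined (in both directions) by disjoint finite paths inside the relevant strong component. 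Strong connectivity of $C(X_{n_k},\omega)$ is exactly what lets me route a path from $B_k$ back toward the next bead and forward again, so that the union of the beads and connecting paths is an inflated symmetric ray---a necklace---with finite branch sets, all of whose rays have a tail in every $C(X_n,\omega)$ and hence represent $\omega$.

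The main obstacle is carrying out this recursion so that the beads stay \emph{finite}, remain \emph{disjoint}, and collectively \emph{escape to $\omega$}, all simultaneously. The tension is that to make the necklace represent $\omega$ I must push successive beads deep into $C(X_n,\omega)$ for larger and larger $n$, yet to keep beads finite and disjoint I must control how the strongly-connecting paths are chosen---each new connecting path must avoid the (finitely many) vertices already used. I expect to handle this by always working inside $C(X_{n_k},\omega)$ for $n_k$ large enough that this strong component avoids all previously constructed vertices, which is possible because the previously used vertices form a finite set and cofinitely many of the $C(X_n,\omega)$ avoid any prescribed finite set. Once disjointness and finiteness are secured by this bookkeeping, the verification that the resulting necklace represents $\omega$ and that $v$ sends an edge to every bead follows directly from the construction, completing the implication.
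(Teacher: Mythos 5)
A preliminary point: this paper does not prove Proposition~\ref{proposition: char limit edge type II} at all; it imports it from the first paper of the series as \cite[Proposition~5.2]{EndsOfDigraphsI}, so there is no in-paper proof to compare against and your argument has to stand on its own. Your direction (ii)$\Rightarrow$(i) does stand: a bead avoiding a finite set $X$ is a strongly connected subset of $D-X$, the rays of $N$ have tails in $C(X,\omega)$, so cofinitely many beads lie in $C(X,\omega)$, and the edge from $v$ into any one of them witnesses the limit edge.

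The gap is in (i)$\Rightarrow$(ii): you fix an enumeration $v_0,v_1,\dots$ of $V(D)$ and work with the exhaustion $X_n=\{v_0,\dots,v_n\}$, which silently assumes that $D$ is countable. The proposition is needed for arbitrary digraphs (Theorem~\ref{thm: normal arborescences are end-faithful} applies it to arbitrary $D$, and the paper explicitly works with uncountable examples). For uncountable $D$ no countable family of finite sets exhausts $V(D)$, and then the verification that your necklace represents $\omega$ collapses: you must exhibit a tail of $N$ in $C(X,\omega)$ for \emph{every} finite $X$, but your construction only controls the components $C(X_n,\omega)$, and for $X\not\subseteq\bigcup_n X_n$ there is no inclusion $C(X_n,\omega)\subseteq C(X,\omega)$ to fall back on. Concretely, if some finite $X_0$ separating $\omega$ from another end $\eta$ is never covered by the $X_n$, your beads may all drift to the $\eta$-side and the necklace then represents $\eta$ rather than $\omega$; replacing the $X_n$ by ``the finite set of vertices used so far'' suffers from the same defect, being again only countably many finite sets. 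The repair is to anchor the recursion to a fixed solid ray $R_0$ representing $\omega$ (a countable object): insist that every bead contains, besides an out-neighbour of $v$, a vertex of a sufficiently deep tail of $R_0$. Then for an arbitrary finite $X$ the tail of $N$ avoiding $X$ is strongly connected in $D-X$ and meets the tail of $R_0$ lying in $C(X,\omega)$, hence lies in $C(X,\omega)$. (Equivalently, one can apply the necklace lemma of \cite{EndsOfDigraphsI} to the out-neighbourhood $U$ of $v$, of which $\omega$ lies in the closure, and then merge consecutive beads so that each resulting bead meets $U$; this is essentially the route the series takes.) As written, your argument proves the proposition only for countable~$D$.
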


For vertex sets $A, B\subseteq V(D)$ let $E(A,B)$ be the set of edges from $A$ to $B$, i.e., $E(A,B)= (A\times B)\cap E(D)$. Now, consider two ends $\omega, \eta\in \Omega(D)$ and a finite vertex set $X\subseteq V(D)$. If $X$ separates $\omega$ and $\eta$ we write $E(X,\omega\eta)$ as short for $E(C(X,\omega),C(X,\eta))$ and if additionally $\omega\eta$ is a limit edge, then we say that it \emph{lives} in $E(X,\omega\eta)$.

\section{Normal arborescences}\label{section: Normal arborescences}
\noindent In this section we introduce normal arborescences and we provide some basic lemmas that we need for the proofs of our main results.

Consider a  digraph $D$ and an arborescence $T\subseteq D$, not necessarily spanning.  The \emph{normal assistant} of $T$ in $D$ is the auxiliary digraph $H$ that is obtained from $T$ by adding an edge $vw$ for every two $\le_T$-incomparable vertices $v,w \in V(T)$ for which there is a $T$-path from $\uc{v}_T$ to $\uc{w}_T$ in $D$, regardless of whether $D$ contains such an edge.
The arborescence $T$ is \emph{normal} in $D$ if the normal assistant of $T$ in $D$ is acyclic;
in this case, we write $\nao:=\le_H$ and we call $\nao$ the \emph{normal order} of $T$. 
Similarly, a reverse arborescence $T$ is \emph{normal} in $D$ if $\Tv$ is normal in $\Dv$.

\begin{lemma}\label{lemma: cycle in normal assistant}
Let $D$ be any digraph and let $T\subseteq D$ be an arborescence. If the normal assistant of $T$ in $D$ contains a cycle, then it also contains a cycle so that consecutive vertices on the cycle are $\le_T$-incomparable.
\end{lemma}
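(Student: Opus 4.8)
The plan is to run an extremal argument on a shortest cycle, powered by a single monotonicity observation about the added ``cross'' edges of the normal assistant $H$. Call an edge $vw$ of $H$ a \emph{cross edge} if $v,w$ are $\le_T$-incomparable (these are exactly the edges witnessed by a $T$-path from $\uc v_T$ to $\uc w_T$), and a \emph{tree edge} if $vw\in E(T)$. Since the only edges of $H$ joining $\le_T$-comparable vertices are tree edges, a cycle has all consecutive vertices incomparable if and only if it uses cross edges only; so it suffices to produce a cross-only cycle. The key monotonicity is: if $u\le_T a$ and $ab$ is a cross edge, then, because $\uc a_T\subseteq\uc u_T$, the $T$-path from $\uc a_T$ to $\uc b_T$ is also one from $\uc u_T$ to $\uc b_T$; hence, provided $u$ and $b$ are incomparable, $ub$ is again a cross edge of $H$. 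In other words, the tail of a cross edge may be lowered along $\le_T$ as long as incomparability with the head survives.

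\textbf{Setup.} I would start from a cycle $C$ in $H$ with the fewest edges, and assume for contradiction that it has a tree edge. Along $C$ every tree edge strictly increases $\le_T$, so $C$ cannot consist of tree edges alone; thus $C$ also contains a cross edge. Reading $C$ cyclically, decompose it into its cross edges $e_1,\dots,e_q$ (with $e_k$ having tail $a_k$ and head $h_k$) separated by maximal ascending tree-runs, so that $h_{k-1}\le_T a_k$ along $T$ for each $k$, with equality exactly when the run is empty. Because $C$ has a tree edge, some run is non-empty; fix such a run and let $z:=h_{k_0}$ be its bottom vertex (the incoming edge of $z$ is a cross edge by maximality of the run), so that $z<_T a_{k_0+1}$.

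\textbf{The advancing induction.} I would then show by induction on $m\ge 1$ that, unless a shortcut appears, $z<_T h_{k_0+m}$. For $m=1$: from $z<_T a_{k_0+1}$ and the cross edge $e_{k_0+1}$, the monotonicity gives a $T$-path from $\uc z_T$ to $\uc{h_{k_0+1}}_T$; if $z$ and $h_{k_0+1}$ were incomparable this would be the cross edge $zh_{k_0+1}$, letting me replace the non-empty run together with $e_{k_0+1}$ by this single edge and obtain a strictly shorter cycle, contradicting minimality. Otherwise $z$ and $h_{k_0+1}$ are comparable, and the relations $h_{k_0+1}\le_T z<_T a_{k_0+1}$ or $h_{k_0+1}=z$ are both impossible since they would make $h_{k_0+1}$ comparable to $a_{k_0+1}$, so $z<_T h_{k_0+1}$. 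The inductive step is identical: from $z<_T h_{k_0+m}\le_T a_{k_0+m+1}$ we get $z<_T a_{k_0+m+1}$, apply monotonicity to $e_{k_0+m+1}$, and either shortcut from $z$ straight to $h_{k_0+m+1}$ (deleting at least the intermediate vertex $h_{k_0+m}\ne z$, hence strictly shorter) or conclude $z<_T h_{k_0+m+1}$. Running $m$ once around the cycle to $m=q$ yields $z<_T h_{k_0+q}=h_{k_0}=z$, the contradiction $z<_T z$. Hence the shortest cycle has no tree edge and is cross-only, proving the lemma.

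\textbf{Main obstacle.} The only real difficulty is the case where the head $h_{k_0+m+1}$ turns out to be $\le_T$-comparable to $z$ rather than incomparable, which blocks the naive one-step shortcut. The crucial point that makes the plan go through is that comparability can only occur as $z<_T h_{k_0+m+1}$ (the reverse inequality and equality are ruled out by incomparability of $a_{k_0+m+1}$ and $h_{k_0+m+1}$), so each non-shortcut step keeps us strictly above $z$; tracking this invariant all the way around the cycle forces the impossible $z<_T z$. I would take care to verify that every shortcut genuinely removes a vertex and so strictly decreases the edge count (guaranteed by the non-emptiness of the first run and by $z<_T h_{k_0+m}$), which is what legitimizes the appeal to minimality.
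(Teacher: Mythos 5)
Your argument is correct, and it is built on the same two pillars as the paper's proof: an extremal choice of a shortest cycle in the normal assistant $H$, and the observation that a witnessing $T$-path from $\uc{a}_T$ to $\uc{b}_T$ is also one from $\uc{u}_T$ to $\uc{b}_T$ whenever $u\le_T a$, so that the tail of a cross edge can be lowered along $\le_T$. The difference lies in how the shortcut is executed. The paper performs a single local surgery: it picks a subpath $uvw$ of the minimal cycle with $uv\in E(T)$ and $v,w$ incomparable, asserts $uw\in E(H)$, and stops. That one step does not explicitly address the case $u<_T w$ with $w$ not a child of $u$ (the case $w\le_T u$ is impossible since it would force $w<_T v$), in which $uw$ is not an edge of $H$ and the replacement is unavailable. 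Your induction around the cycle is designed precisely to absorb this obstruction: whenever the one-step shortcut from the fixed bottom vertex $z$ is blocked, you show it is blocked only because the next head lies strictly above $z$, and you propagate this invariant once around the cycle to the impossible $z<_T z$. The price is a longer argument; what it buys is a complete case analysis where the paper relies on the comparability case being handled implicitly. The individual steps all check out: the monotonicity claim, the fact that tree runs strictly ascend so a minimal cycle containing a tree edge must also contain a cross edge and a non-empty run, the elimination of $h\le_T z$ via the incomparability of the endpoints of a cross edge, and the verification that each shortcut removes at least one internal vertex and therefore yields a strictly shorter genuine cycle, contradicting minimality.
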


\begin{proof}
Let $H$ be the normal assistant of $T$ in $D$ and let $C$ be a cycle in $H$ of minimal length. Suppose for a contradiction that $C$ contains consecutive vertices that are $\le_T$-comparable. As $T$ is acyclic the cycle $C$ cannot be contained entirely in $T$; in particular $C$ has length at least three. Thus we find a subpath $uvw\subseteq C$ such that $u$ is the $\le_T$-predecessor of $v$, and such that $v$ and $w$ are $\le_T$-incomparable.  But then also $uw\in E(H)$ and replacing the path $uvw$ in $C$ by the edge $uw$ gives a shorter cycle. 
\end{proof}

An extension $\preceq$ of $\le_T$ on an arborescence $T$ is \emph{branch sensitive} if for any two $\le_{T}$-incomparable vertices $v \preceq w$ of $T$ there is no $v'\in \uc{v}_T$ with $w \preceq v'$. An extension $\preceq$ of $\le_T$ on $T$ is \emph{path sensitive} if for no two  $\le_T$-incomparable vertices $v\preceq w$ the digraph $D$ contains a $T$-path from $w$ to $v$. Note that the normal order of any normal arborescence $T\subseteq D$ is both branch sensitive and path sensitive. A \emph{\lno } on $T$ is a linear extension of $\le_T$ on $T$ that is both branch sensitive and path~sensitive.

\begin{lemma}\label{lemma: normal order and linear normal order}
Let $D$ be any digraph and let $T\subseteq D$ be an arborescence in $D$. Then $T$ is normal in $D$ if and only if there is a \lno\ on $T$.
\end{lemma}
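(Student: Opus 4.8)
The plan is to prove both implications by comparing the linear order $\preceq$ with the edge set of the normal assistant $H$ of $T$ in $D$. The pivotal observation, which I would isolate first and which is used in both directions, is a \emph{downward-closure} property of the cross edges of $H$. Suppose $vw\in E(H)$ is a cross edge, i.e.\ $v,w$ are $\le_T$-incomparable, and let $u:=v\wedge_T w$ be their meet in $T$, with children $c_v,c_w$ (the neighbours of $u$ on the tree-paths to $v$ and $w$). By definition there is a $T$-path $P$ in $D$ from some $x\in\uc{v}_T$ to some $y\in\uc{w}_T$. Since $c_v\le_T v\le_T x$ and $c_w\le_T w\le_T y$, the very same path $P$ is a $T$-path from $\uc{c_v}_T$ to $\uc{c_w}_T$, and $c_v,c_w$ are incomparable, so $c_vc_w\in E(H)$. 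Moreover $x$ and $y$ lie in different subtrees below $u$ and are hence themselves $\le_T$-incomparable. The main obstacle of the whole lemma, I expect, is to notice this reduction: without it the sibling constraints imposed by deep cross edges look irreducible, and one might wrongly fear that a normal $T$ could fail to admit a sensitive order.

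For the forward direction, assume $H$ is acyclic. For each node $u$ of $T$, the downward-closure property shows that the sibling constraints arising from cross edges are already recorded among the edges of the induced subdigraph $H\!\left[\{\text{children of }u\}\right]$, which is acyclic as a subdigraph of $H$. Using the order-extension principle I would choose, for every $u$, a linear order $\prec_u$ of the children of $u$ extending this induced subdigraph, and let $\preceq$ be the induced linear order of the tree with these ordered branches: for $\le_T$-incomparable $v,w$ set $v\prec w$ iff $c_v\prec_u c_w$ at their meet $u$, and order comparable vertices by $\le_T$. That this is a well-defined linear extension of $\le_T$ is routine, the down-closures $\dc{\cdot}_T$ being finite chains. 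Branch sensitivity is then immediate: if $v\prec w$ are incomparable with meet $u$ and $v'\in\uc{v}_T$, then $v'\in\uc{c_v}_T$ while $w\in\uc{c_w}_T$ and $c_v\prec_u c_w$, so $v'\prec w$, i.e.\ $w\not\preceq v'$. For path sensitivity it suffices to verify that every edge of $H$ is $\preceq$-increasing: then $\preceq$ extends $\le_H$, and a $T$-path from $w$ to $v$ with $v\prec w$ incomparable would witness $wv\in E(H)$, forcing $w\prec v$, a contradiction. Tree edges are increasing since $\preceq$ extends $\le_T$; a cross edge $vw$ satisfies $c_vc_w\in E(H)$ by downward closure, hence $c_v\prec_u c_w$ and so $v\prec w$.

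For the backward direction, let $\preceq$ be a sensitive order. I would again show that every edge of $H$ is $\preceq$-increasing, which precludes any directed cycle in $H$ and hence makes $T$ normal. Tree edges are increasing because $\preceq$ extends $\le_T$. For a cross edge $vw$, take the witnessing $T$-path $P$ from $x\in\uc{v}_T$ to $y\in\uc{w}_T$ as above; since $x,y$ are incomparable and $P$ runs from $x$ to $y$, path sensitivity forbids $y\prec x$, so $x\prec y$. A short consequence of branch sensitivity is subtree-consistency: if $a\prec b$ are incomparable with children $c_a,c_b$ at their meet, then first $c_a\prec c_b$ (else branch sensitivity applied to $c_b\prec c_a$ is violated by $b\in\uc{c_b}_T$ with $c_a\preceq b$), and then $\uc{c_a}_T\prec c_b\preceq\uc{c_b}_T$, so every element of $\uc{c_a}_T$ precedes every element of $\uc{c_b}_T$. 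Applying this to $x\prec y$, whose meet is again $u$ with children $c_v,c_w$, gives $\uc{c_v}_T\prec\uc{c_w}_T$, and since $v\in\uc{c_v}_T$ and $w\in\uc{c_w}_T$ we conclude $v\prec w$. Thus all edges of $H$ are increasing and $H$ is acyclic, as required. (One could alternatively invoke Lemma~\ref{lemma: cycle in normal assistant} to restrict attention to cycles with $\le_T$-incomparable consecutive vertices, but showing every edge increasing makes this unnecessary.)
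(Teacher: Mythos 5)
Your proof is correct. It rests on the same core idea as the paper's --- acyclicity of the normal assistant $H$ is governed by its restriction to the sibling sets of $T$ --- but both directions are organised differently, and your explicit \emph{downward-closure} observation (every cross edge $vw$ of $H$ yields a cross edge $c_vc_w$ between the children of the meet of $v$ and $w$) is left implicit in the paper. For the forward implication the paper constructs the \lno\ recursively, level by level, as an ascending union of orders obtained from linear extensions of $\le_H$ on each sibling set, and declares the sensitivity of each stage straightforward to check; your global definition (comparable vertices ordered by $\le_T$, incomparable ones by comparing children at their meet) produces essentially the same order, and the downward-closure property is exactly what makes the path-sensitivity check go through, so isolating it is a genuine gain in transparency. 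For the backward implication the paper argues by contradiction: it invokes Lemma~\ref{lemma: cycle in normal assistant} to obtain a cycle of $H$ whose consecutive vertices are $\le_T$-incomparable and derives a contradiction at its $\preceq$-largest vertex, whereas you show directly that every edge of $H$ is $\preceq$-increasing, which rules out cycles without the auxiliary lemma. The only price you pay is the verification that your globally defined order is indeed transitive and total, which you correctly flag as routine and which does go through by the usual case analysis on the relative position of the two meets.
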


\begin{proof}
For the forward implication assume that $T$ is normal in $D$. Let us write $L_n$ for the $n$th level of $T$ and let us write $T_n$ for the arborescence that $T$ induces on $\bigcup \{\, L_m \mid m\le n\,\}$.  We recursively construct an ascending sequence of orders $(\preceq_n)_{n\in \N}$ such that $\preceq_n$ is a \lno\ on $T_n$ as follows. In the base case, we let $\preceq_0:= \le_{T_0}$. In the recursive step, suppose that we have defined $\preceq_n$.  Let us write  for every $v\in L_n$ the set of up-neighbours (children) of $v$ in $T$  as $N_v$. For every $v\in L_n$ let  $\preceq_v$ be a linear extension of the restriction of $\nao$ to $N_v$. And for every two distinct vertices $v,w\in L_n$ with $v\preceq_n w$ we define $v'\preceq_{vw}w'$ whenever $v'\in N_v$ and $w'\in  \dc{N_w}_T\setminus \dc{v}_T$. Now, let $\preceq_{n+1}$ be the transitive closure of $$\preceq_n \cup \bigcup \{\, \preceq_v \mid v \in L_n\,\} \cup \bigcup \{\, \preceq_{vw}\mid v\neq w \text{ in } L_n\,\}.$$
It is straightforward to check that the order $\preceq_{n+1}$ is a \lno\ on $T_{n+1}$. Hence \mbox{$\bigcup \{\, \preceq_n \mid n \in \N \, \}$} is a \lno\ on $T$ as an ascending union of \lno s on subarborescences of $T$. 

For the backward implication assume that $T$ has a \lno\ $\preceq$ on~$T$. Suppose for a contradiction that $T$ is not normal in $D$. Let $H$ be the normal assistant of $T$ in $D$. Then $H$ contains a cycle $C$ and by Lemma~\ref{lemma: cycle in normal assistant} we may assume that consecutive vertices on $C$ are $\le_T$-incomparable. Let $c$ be the $\preceq$-largest vertex on $C$ and let $c'$ be its successor on $C$. Note that $c'\preceq c$ by the choice of $c$. The edge $cc'$ of $C\subseteq H$ is witnessed by a $T$-path $P$ from $\uc{c}_T$ to $\uc{c'}_T$. Let $w$ be the first vertex and $v$ the last vertex of $P$. 
As $\preceq$ is branch sensitive, we have $v\preceq w$. But then the two vertices $v$ and $w$ together with $P$ show that $\preceq$ is not path sensitive contradicting that $\preceq$ is a \lno\ on $T$.
\end{proof}

\begin{cor}\label{corollary: finite dfs trees are NSAs}
A spanning arborescence of a finite digraph is normal if and only if it defines a depth-first search tree. 
\end{cor}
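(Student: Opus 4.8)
The plan is to prove Corollary~\ref{corollary: finite dfs trees are NSAs} by showing that for a finite digraph $D$ with a spanning arborescence $T$, the existence of a \lno{} on $T$ is equivalent to $T$ arising as a depth-first search tree; the corollary then follows immediately from Lemma~\ref{lemma: normal order and linear normal order}, which already identifies normality with the existence of a \lno. The key observation connecting the two notions is that a depth-first search is, in essence, nothing but a linear order in which the vertices get visited (or rather, finished/backtracked from), so I would aim to read off a \lno{} from the temporal data of the search and, conversely, to run a valid search guided by a given \lno.

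For the forward direction, suppose $T$ is a depth-first search tree of $D$. The search assigns to each vertex a discovery time and, more usefully here, the nesting structure of the intervals $[\mathrm{discover}(v),\mathrm{finish}(v)]$. I would define $\preceq$ on $V(T)$ by ordering vertices according to when they are \emph{first visited} (discovery time), which is a linear order since $D$ is finite. This $\preceq$ extends $\le_T$ because a vertex is discovered before any of its $T$-descendants. For branch sensitivity, if $v\preceq w$ are incomparable, then $v$ is discovered first, so the search fully finishes exploring $\uc{v}_T$ before discovering $w$; hence every $v'\in\uc{v}_T$ satisfies $v'\preceq w$, ruling out $w\preceq v'$. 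For path sensitivity, the crucial input is the discussion already given in the introduction of the excerpt: for a finite digraph, $T$ together with all edges between incomparable vertices forms an \emph{acyclic} subdigraph, and along any such edge the head is visited before the tail. This is exactly the property needed: a $T$-path from $w$ to $v$ would, by concatenation, force an edge or reachability witnessing that $w$'s up-closure reaches $v$'s, i.e.\ a normal-assistant edge $wv$, so discovery-time order must place $v$ before $w$, contradicting $v\preceq w$. Thus $\preceq$ is a \lno.

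For the backward direction, suppose $\preceq$ is a \lno{} on $T$. I would run depth-first search on $D$ starting from the root $r$ of $T$, breaking all ties by $\preceq$: whenever the algorithm stands at a vertex and must choose an unvisited out-neighbour to descend into, it picks the $\preceq$-smallest eligible one. The claim is that this search produces exactly $T$. One argues by induction along $\preceq$ (equivalently, along the run of the algorithm) that each vertex $v\ne r$ is first reached across its $T$-edge from its $T$-parent, so that the tree edges traversed are precisely $E(T)$. Branch sensitivity guarantees that the algorithm does not jump out of $\uc{v}_T$ prematurely (the whole up-closure of an earlier vertex is exhausted before later incomparable vertices are touched), and path sensitivity guarantees that no ``back'' edge tempts the algorithm into discovering a $\preceq$-smaller vertex out of turn, so that the parent assigned by the search agrees with the $T$-parent. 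Since $r$ reaches all of $D$ (as $T$ is spanning) the search visits every vertex, and the resulting search tree is $T$.

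The main obstacle I anticipate is bookkeeping the backward direction rigorously: one must formalise ``run DFS guided by $\preceq$'' and prove by a careful induction that the discovery order coincides with $\preceq$ and that every non-root vertex is discovered along its $T$-edge. The two \lno{} properties do precisely the two jobs needed here---branch sensitivity controls the nesting of up-closures so the search respects the tree structure, and path sensitivity forbids the back-edges that would otherwise redirect the search---but matching these combinatorial conditions to the operational behaviour of the algorithm is the delicate step. A cleaner alternative that sidesteps the algorithmic language entirely is to invoke the standard characterisation (cited via \cite{cormen2009book} in the introduction) that a spanning arborescence of a finite digraph is a DFS tree if and only if $T$ together with all edges between incomparable vertices is acyclic, and then simply identify that acyclicity condition with the acyclicity of the normal assistant; I would likely present the proof in this more structural form, reducing everything to Lemma~\ref{lemma: normal order and linear normal order} and the finite-$D$ remarks already made in the introduction.
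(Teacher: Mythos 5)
Your forward direction (DFS tree $\Rightarrow$ normal) has an orientation error that breaks the argument. In a depth\--first search tree of a finite digraph, every edge of $D$ between $\le_T$-incomparable vertices runs from the \emph{later}-discovered vertex to the \emph{earlier}-discovered one (this is the footnoted fact you quote: the head is visited before the tail). So if $v\preceq w$ in your discovery order and $D$ contains a $T$-path from $w$ to $v$ --- a single edge, since $T$ is spanning --- then its head $v$ being discovered before its tail $w$ is exactly the situation at hand: your sentence ``discovery-time order must place $v$ before $w$, contradicting $v\preceq w$'' rederives the hypothesis rather than a contradiction. The discovery order extends $\le_T$ and is branch sensitive, but it is in general \emph{not} path sensitive. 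Concretely, take $V(D)=\{r,a,b\}$, $E(D)=\{ra,rb,ab\}$ and the DFS tree $T=\{ra,rb\}$ obtained by visiting $b$ first: the discovery order has $b\prec a$, yet $D$ has the edge from $a$ to $b$ between the incomparable vertices $a$ and $b$, which is precisely the forbidden configuration. What you need is a linear extension of $\le_T$ that \emph{reverses} discovery time on incomparable pairs, e.g.\ decreasing finishing time; for that order both sensitivity conditions do hold. The same flip infects your backward direction: to reproduce $T$ the guided search must descend into the $\preceq$-\emph{largest} unvisited child, not the smallest (in the example above, smallest-first with the correct \lno\ $r\prec a\prec b$ produces the tree $\{ra,ab\}\ne T$).

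For comparison: the paper proves normal $\Rightarrow$ DFS exactly as you propose (Lemma~\ref{lemma: normal order and linear normal order} plus a search that always picks the $\preceq$-largest child), but it proves DFS $\Rightarrow$ normal without constructing any order from the search: it takes a cycle in the normal assistant with consecutive vertices incomparable (Lemma~\ref{lemma: cycle in normal assistant}), lets $x$ be its first-visited vertex and $y$ its successor, and observes that the witnessing edge from $\uc{x}_T$ to $\uc{y}_T$ would have been traversed by the search, so it would lie in $T$ --- a contradiction. That route sidesteps the orientation issue entirely, and since $T$ is spanning the witnessing $T$-path really is a single edge, which keeps it short. Finally, the ``cleaner alternative'' you sketch is not available off the shelf: the reference to \cite{cormen2009book} only supports the implication that DFS trees have the acyclicity property, not the converse characterisation, so invoking it for the $\Leftarrow$ direction would be circular.
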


\begin{proof} Let $T$ be a spanning arborescence of a finite digraph $D$. 
For the forward implication assume that $T$ is normal in $D$. By Lemma~\ref{lemma: normal order and linear normal order}, we find a \lno\ $\preceq$ on~$T$. Then $T$ is defined by the traversed edges of  the depth-first search  that starts in the root of $T$ and  always chooses the $\preceq$-largest up-neighbour (child) in $T$ in each step.

For the backward implication assume that $T$ is a depth-first search tree and suppose for a contradiction that $T$ is not normal in $D$. Then the normal assistant of $T$ contains a cycle $C$ and by Lemma~\ref{lemma: cycle in normal assistant} we may choose $C$ so that consecutive vertices on $C$ are $\le_T$-incomparable. Let $x$ be the vertex on $C$ that is visited first in the depth-first search and let $y$ be its successor on $C$. 
The edge $xy$ of the normal assistant of $T$ is witnessed by a $T$-path from $\uc{x}_T$ to $\uc{y}_T$. As $T$ is spanning this path is just an edge $e$. Note that all vertices in $\uc{x}_T$ are visited earlier than those  in $\uc{y}_T$ in the depth-first search. Hence the edge $e$ should have been visited by the depth-first search; this is a contraction because $e$ is not an edge of~$T$. 
\end{proof}

We think of (countable) normal spanning arborescence $T\subseteq D$ as being drawn in the plane with all the edges between $\le_T$-incomparable vertices running from left to right; see Figure~\ref{fig:NSA}.



Let us see that, similar to their undirected counterparts, normal arborescences capture the separation properties of $D$, while they carry the simple structure of an arborescence:

\begin{lemma}\label{lemma: separation props of NSA}
Let $D$ be any digraph and let $T\subseteq D$ be a normal arborescence in $D$. If $v,w\in V(T)$ are $\le_T$-incomparable vertices of $T$ with $w\ntrianglelefteq_T  v$, then every $w$--$v$ path in $D$ meets $X:=\dc{v}_T\cap \dc{w}_T$. In particular, $X$ separates $v$ and $w$ in $D$. 
\end{lemma}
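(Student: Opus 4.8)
The plan is to pass from the partial normal order to a linear \emph{sensitive order} and to track, not the vertices of the path themselves, but the component of $T-X$ in which they lie. Write $m:=v\wedge w$ for the meet of $v$ and $w$ in $T$. Since $\dc{v}_T$ and $\dc{w}_T$ are the chains from the root to $v$ and to $w$, their intersection is $X=\dc{m}_T$, the finite chain from the root to $m$. In $T-X$ the vertices $v$ and $w$ lie in two distinct components, namely $\uc{v_0}_T$ and $\uc{w_0}_T$, where $v_0\neq w_0$ are the children of $m$ below $v$ and below $w$. As $T$ is normal, Lemma~\ref{lemma: normal order and linear normal order} provides a sensitive order $\preceq$ on $T$, which I choose (using $w\ntrianglelefteq_T v$ and the incomparability of $v,w$) so that $v\prec w$. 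Assuming for contradiction that some $w$--$v$ path $P$ avoids $X$, the goal is to derive $w\prec v$.

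First I would record how the components of $T-X$ sit inside $\preceq$. Their roots are exactly the children, not on the chain $X$, of the vertices of $X$, and any two such roots are $\le_T$-incomparable. If $c\prec c'$ are two of them, then branch sensitivity gives $\uc{c}_T\prec c'$ while $\le_T\subseteq\preceq$ gives $c'\preceq\uc{c'}_T$; hence every vertex of $\uc{c}_T$ precedes every vertex of $\uc{c'}_T$. Thus the components of $T-X$ are pairwise $\preceq$-separated blocks, linearly ordered; write $B\lhd B'$ for this block order. Applied to the blocks containing $v$ and $w$, the choice $v\prec w$ forces $\uc{v_0}_T\lhd\uc{w_0}_T$.

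Next I would walk along $P$. Let $w=t_0,t_1,\dots,t_\ell=v$ be the vertices of $P$ lying in $T$, in the order they occur, so that consecutive ones are joined by a segment of $P$ whose interior avoids $T$, i.e.\ by a $T$-path from $t_i$ to $t_{i+1}$. I claim the block $B_i$ containing $t_i$ is $\lhd$-nondecreasing. If $t_i,t_{i+1}$ are $\le_T$-comparable, the segment is a tree up- or down-step, and since $P$ avoids $X$ neither step can leave the component of $t_i$ (a descent past the component root would land in $X$), so $B_{i+1}=B_i$. If $t_i,t_{i+1}$ are incomparable, the segment is a $T$-path from $\uc{t_i}_T$ to $\uc{t_{i+1}}_T$, so $t_it_{i+1}$ is an edge of the normal assistant and path sensitivity gives $t_i\prec t_{i+1}$; since the blocks are separated intervals this yields $B_i=B_{i+1}$ or $B_i\lhd B_{i+1}$. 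As $v_0\neq w_0$, nondecreasingness forces $\uc{w_0}_T=B_0\lhd B_\ell=\uc{v_0}_T$, whence $w\prec v$, contradicting $v\prec w$. The ``In particular'' is then immediate, since having every $w$--$v$ path meet $X$ is exactly one of the two alternatives in the definition of separation.

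The step I expect to be the main obstacle is precisely the down-steps: a segment of $P$ descending in $T$ to an ancestor is the only kind of move that \emph{decreases} $\preceq$, and it is the reason the naive argument ``the order only increases along $P$, so $w\preceq v$'' fails---indeed this is why the statement would be false if the linear sensitive order were replaced by the merely partial normal order $\trianglelefteq_T$. The resolution, above, is to track the coarser invariant of the current component of $T-X$: a down-step cannot escape its component without meeting $X$, so it leaves the block fixed, whereas only genuine $T$-paths change the block and those can only move it up. Making this rigorous---that the components really are $\preceq$-intervals, and that a within-component descent cannot slip below the component root without entering $X$---is exactly where the hypotheses ``$T$ is normal'' (through branch and path sensitivity) and ``$P$ avoids $X$'' are both used.
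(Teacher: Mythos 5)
Your global strategy---pass to a linear sensitive order, observe that the components of $T-X$ are $\preceq$-intervals (by branch sensitivity), and show that along a $w$--$v$ path avoiding $X$ the current component can only stay put or move $\lhd$-forward (comparable steps stay inside a component because a descent out of it would enter $X$, while incomparable jumps move forward by path sensitivity)---is sound, and it is essentially a linearised version of the paper's argument, which partitions the components of $T-X$ into those whose roots $y$ satisfy $y\trianglelefteq_T v$ and the rest, and extracts a contradiction from the first crossing of $P$ between the two classes. The gap is in your opening move: the parenthetical claim that, because $w\ntrianglelefteq_T v$, you may choose the sensitive order $\preceq$ with $v\prec w$. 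By your own block analysis, $v\prec w$ is equivalent to $v_0\prec w_0$, and path sensitivity makes this choice impossible whenever there is a $T$-path from $\uc{w_0}_T$ to $\uc{v_0}_T$; but that is perfectly compatible with $w\ntrianglelefteq_T v$, since a relation between the component roots need not propagate in $\trianglelefteq_T$ to arbitrary vertices above them. Concretely, let $T$ have root $m$ with children $v_0,w_0$, let $v$ be a child of $v_0$ and $w$ a child of $w_0$, and let $D$ be obtained from $T$ by adding the single edge $w_0v_0$. The normal assistant is $T$ together with $w_0v_0$, which is acyclic and contains no path from $w$ to $v$, so $w\ntrianglelefteq_T v$; yet the $T$-path $w_0v_0$ forces $w_0\prec v_0$ and hence, by branch sensitivity, $w\prec v$ in \emph{every} sensitive order. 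So no sensitive order with $v\prec w$ exists and your proof cannot get started.

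The underlying issue is that your walk only ever produces a relation between the component roots $w_0$ and $v_0$ (equivalently, $w\prec v$ in every sensitive order), whereas the hypothesis you must contradict concerns $w$ and $v$ themselves. The paper's proof differs on exactly this point: its two classes are defined by whether the component root $y$ satisfies $y\trianglelefteq_T v$---referencing the target vertex $v$ rather than the root $v_0$ of its component---and the contradiction is obtained from the single $T$-path $z_2Pz_1$ at the first crossing, using only the partial normal order and never a globally chosen linear order. To repair your argument you would need to show that $w\ntrianglelefteq_T v$ implies that some sensitive order puts $v_0\prec w_0$ (which, as the example shows, it does not), or else rearrange the walk so that the contradiction lands on the relation $w\trianglelefteq_T v$ itself rather than on $w_0\trianglelefteq_T v_0$.
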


\begin{proof} Suppose for a contradiction that $P$ is  a $w$--$v$ path in $D$ that avoids $X$, for $\leq_T$-incomparable vertices $v,w\in V(T)$ with $w\ntrianglelefteq_T  v$. Let $N_X$ consist of all neighbours of $X$ in the digraph $T$ that are contained in $V(T-X)$, let $N_X^1$ consist of all vertices $y\in N_X$ with $y\nao v$ and let $N_X^2:=N_X\setminus N_X^1$. Moreover, let $Z_i$ be the union of the up-closures $\uc{s}_T$ with $s\in N_X^i$, for $i=1,2$. Note that $Z_1$ and $Z_2$ partition $V(T-X)$. As $\nao$ is branch sensitive, we  observe that any two vertices $z_1, z_2\in V(T)\setminus X$ with  $z_1\in Z_1$ and $z_2\in Z_2$ are either incomparable with regard to $\nao$, or satisfy $z_1\nao  z_2$. Let $z_1$ be the first vertex of $P$ in $Z_1$ and let $z_2$ be the last vertex of $P$ in $T$ that precedes $z_1$ in the path-order of $P$. Note that $z_2$ is contained in $Z_2$ by our assumption that $P$ avoids $X$. Hence the $T$-path $z_2Pz_1$ witnesses that $z_2 \nao z_1$ contradicting our aforementioned observation. 
\end{proof}

The \emph{dichromatic number} \cite{neumanndichromatic} of a digraph $D$ is the smallest cardinal $\kappa$ so that $D$ admits a vertex partition into $\kappa$ many partition classes that are acyclic in~$D$. From the path sensitivity of normal arborescences we obtain the following:

\begin{proposition}
Every digraph that has a normal spanning arborescence does have a countable dichromatic number. 
\end{proposition}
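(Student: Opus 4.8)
The plan is to use the existence of a path sensitive linear extension (a sensitive order) of $\le_T$ to exhibit a countable acyclic vertex partition. Let $T\subseteq D$ be a normal spanning arborescence. By Lemma~\ref{lemma: normal order and linear normal order}, there is a sensitive order $\preceq$ on $T$, which by definition is a linear extension of $\le_T$ that is both branch sensitive and path sensitive. The key feature I want to exploit is path sensitivity: for no two $\le_T$-incomparable vertices $v\preceq w$ does $D$ contain a $T$-path from $w$ to $v$.

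First I would partition $V(D)=V(T)$ according to the level of $T$, writing $L_n$ for the $n$th level and setting the partition classes to be exactly the levels $L_0,L_1,L_2,\dots$. This is a countable partition since a spanning arborescence has countably many levels (each level is a set of vertices at a fixed finite distance from the root, and there are only $\aleph_0$ many finite distances). The main claim to verify is that each level $L_n$ is acyclic in $D$. The crucial point is that any two distinct vertices on a common level of $T$ are $\le_T$-incomparable, so within a level the order $\preceq$ restricts to a linear order on $\le_T$-incomparable vertices, and path sensitivity applies to all of them.

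The main step, and the one I expect to be the heart of the argument, is to show $D[L_n]$ contains no directed cycle. Suppose for contradiction that $C=x_0x_1\cdots x_{k}x_0$ is a cycle in $D[L_n]$. Since all vertices of $C$ lie on the same level they are pairwise $\le_T$-incomparable, and each edge $x_ix_{i+1}$ of $C$ is a single edge of $D$ between two $\le_T$-incomparable vertices, hence in particular a $T$-path from $x_i$ (which lies in $\uc{x_i}_T$) to $x_{i+1}$ (which lies in $\uc{x_{i+1}}_T$). Let $x_j$ be the $\preceq$-largest vertex of $C$ and let $x_{j+1}$ be its successor on $C$ (indices mod $k+1$). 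Then $x_{j+1}\preceq x_j$ by maximality, while the edge $x_jx_{j+1}$ is a $T$-path in $D$ from $x_j$ to $x_{j+1}$ with $x_{j+1}\preceq x_j$ and $x_j,x_{j+1}$ being $\le_T$-incomparable. This directly contradicts path sensitivity of $\preceq$.

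I expect the only real obstacle to be pinning down the observation that an edge of $D$ between two $\le_T$-incomparable vertices genuinely is a $T$-path in the sense required: a $T$-path is a non-trivial path meeting $T$ exactly in its endvertices, and a single edge of $D$ whose two endvertices lie in $V(T)$ trivially qualifies, so this is immediate. Having reached a contradiction, each $L_n$ is acyclic in $D$, so $\{L_n\mid n\in\N\}$ is a countable acyclic vertex partition of $D$ and hence the dichromatic number of $D$ is at most $\aleph_0$, i.e.\ countable. This completes the proof.
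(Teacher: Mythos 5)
Your proof is correct and follows essentially the same route as the paper: partition $V(D)$ into the levels $L_n$ of $T$ and use path sensitivity against the $\preceq$-largest vertex of a hypothetical cycle in $D[L_n]$ to conclude each level is acyclic. The only cosmetic difference is that you pass to a linear sensitive order via Lemma~\ref{lemma: normal order and linear normal order}, while the paper argues directly with the (partial) normal order $\nao$; your variant makes the choice of a ``largest'' vertex on the cycle cleaner but is otherwise identical.
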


\begin{proof}
We denote by $L_n$ the $n$th level of $T$ and claim that $L_n$ is acyclic for every $n \in \N$. The vertices in $L_n$ are pairwise $\le_T$-incomparable.  As $T$ is path sensitive there is no $w$-$v$ path in $D[L_n]$ between vertices $v\nao w$ in $L_n$. This would be violated by the $\nao$-largest vertex $w$ and its successor $v$ in $C$ of any directed cycle $C\subseteq D[L_n]$. Hence the non-empty $L_n$ define a partition of $V(D)$ into acyclic vertex sets, witnessing that $D$ has a countable dichromatic number.
\end{proof}

\section{Arborescences are end-faithful}\label{section: end-faithful}
\noindent In this section we prove that normal spanning arborescences capture the end space combinatorially. Let $T\subseteq D$ be a fixed arborescence of a digraph $D$ and let $\Psi$ be a set of ends of $D$. We say that $T$ is \emph{end-faithful for $\Psi$} if every end in $\Psi$ is represented by a unique ray of $T$ that starts from the root. We call the rays in a normal arborescence $T$ that start from the root \emph{normal rays} of $T$. 
We say that an end $\omega$ of $D$ is contained in the  \emph{closure} of a vertex set $U\subseteq V(D)$ if $C(X,\omega)$ meets $U$ for every finite vertex set $X\subseteq V(D)$. Note that an end $\omega$ is contained in the closure of the vertex set of a ray $R$ if and only if $R$ represents $\omega$.

\begin{mainresult}\label{thm: normal arborescences are end-faithful}
Let $D$ be any digraph and let $U\subseteq V(D)$ be any vertex set. If $T$ is a normal arborescence containing $U$, then $T$ is end-faithful for the set of ends in the closure of $U$.
\end{mainresult}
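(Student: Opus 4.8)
The plan is to prove both halves of end-faithfulness—existence and uniqueness of a representing normal ray—from a single structural fact: whenever $v,w\in V(T)$ are $\le_T$-incomparable, the finite set $\dc v_T\cap\dc w_T$ of their common $\le_T$-predecessors separates them, so $v$ and $w$ lie in distinct strong components of $D-X$ for every finite $X\supseteq\dc v_T\cap\dc w_T$. This is exactly Lemma~\ref{lemma: separation props of NSA}, applied in whichever direction the normal order permits. Fix an end $\omega$ in the closure of $U$, and call $t\in V(T)$ \emph{good} if $\omega$ lies in the closure of $\uc t_T\cap U$, that is, $C(X,\omega)$ meets $\uc t_T\cap U$ for every finite $X$. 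The root is good because $\uc r_T\supseteq U$, and goodness is clearly downward closed in $\le_T$.

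For existence I would first show that the set $G$ of good vertices is a $\le_T$-chain: if two good vertices $t,t'$ were incomparable, put $X:=\dc t_T\cap\dc{t'}_T=\dc{t\wedge t'}_T$; a $U$-witness in $\uc t_T\cap C(X,\omega)$ and one in $\uc{t'}_T\cap C(X,\omega)$ would lie in distinct strong components of $D-X$ by the separation fact, yet both sit in $C(X,\omega)$—a contradiction. Hence $G$ is a downward-closed chain through the root, so it is either a finite initial path or a ray. To rule out the finite case I would show every good $t$ has a good child: for each finite $X\supseteq\dc t_T$ the separation fact forces all $U$-witnesses of $t$ inside $C(X,\omega)$ into a single child subtree (two child subtrees are separated by $\dc t_T\subseteq X$), and comparing the witnesses for $X$ with those for $\dc t_T$ shows this child is independent of $X$ and is itself good. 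Thus $G$ has no maximal element and is a normal ray $R=t_0t_1t_2\dots$.

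The main work is to verify that $R$ represents $\omega$. By the remark that an end lies in the closure of a ray's vertex set precisely when the ray represents it, it suffices to prove $C(X,\omega)\cap V(R)\neq\emptyset$ for every finite $X$; the subtlety is that membership in a \emph{strong} component also requires backward reachability, which the tree edges do not provide. I would circumvent this by a stabilisation argument. Fix $X$, choose $n$ with $\uc{t_n}_T\cap X=\emptyset$, and for $m\ge n$ set $C_m:=C(X\cup(\dc{t_m}_T\setminus\{t_m\}),\omega)\subseteq C(X,\omega)$. The separation fact confines the tree vertices of $C_m$ to $\uc{t_m}_T$. If $t_m\in C_m$ for some $m$ we are done, since then $t_m\in C(X,\omega)$. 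Otherwise the tree vertices of $C_m$ lie in a single child subtree $\uc{t_{m+1}}_T$; and since $t_m\notin C_m$ while a strong component is internally strongly connected, deleting $t_m$ leaves $C_m$ intact, giving $C_m=C_{m+1}$. Iterating yields one fixed component $C_\ast$ whose tree vertices lie in $\uc{t_m}_T$ for all $m\ge n$, hence in $\bigcap_m\uc{t_m}_T=\emptyset$—contradicting that $C_\ast$ contains a $U$-witness. So some $t_m\in C(X,\omega)$, as required. This step is the crux, and it is what makes the non-spanning case work: the witnesses, lying in $U\subseteq V(T)$, are always tree vertices, which is exactly what the emptiness of $\bigcap_m\uc{t_m}_T$ contradicts.

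Uniqueness is then short. If distinct normal rays $R,R'$ both represented $\omega$, let $t_k$ be their last common vertex and $c\neq c'$ their next (incomparable) vertices; with $X:=\dc{t_k}_T$, the tails of $R$ and $R'$ lie in $\uc c_T$ and $\uc{c'}_T$, which the separation fact places in distinct strong components of $D-X$, whereas both tails must lie in $C(X,\omega)$—a contradiction. Hence the representing normal ray is unique. I expect the stabilisation step of the existence half to be the principal obstacle, both because it is where the directed, strong-component nature of ends bites and because it is the only place where non-tree vertices in the components must be controlled.
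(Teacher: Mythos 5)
Your proof is correct, but it takes a genuinely different route from the paper's. For uniqueness both arguments are the same application of Lemma~\ref{lemma: separation props of NSA} to the common initial segment of two normal rays. For existence, however, the paper first converts the hypothesis ``$\omega$ lies in the closure of $U$'' into a necklace attached to $U$ representing $\omega$ (via an auxiliary apex vertex and Proposition~\ref{proposition: char limit edge type II}), then applies the star--comb lemma in the tree underlying $T$ to one $U$-vertex per bead, rules out the star with the separation lemma, and checks that the comb's spine works because $C(X,\omega)$ must meet $\dc{u}_T\cap\dc{u'}_T$ for any two teeth $u,u'$ in $C(X,\omega)$. You instead isolate the representing ray intrinsically as the set of ``good'' vertices, prove it is a normal ray by the chain and good-child arguments, and then establish representation by your stabilisation argument on the nested components $C_m=C(X\cup(\dc{t_m}_T\setminus\{t_m\}),\omega)$ --- using that a strong component missing $t_m$ survives the deletion of $t_m$, so that perpetual failure of $t_m\in C_m$ would freeze one component whose tree vertices are squeezed into $\bigcap_m\uc{t_m}_T=\emptyset$, contradicting the presence of a $U$-witness. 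I checked the delicate points (that $Y_{m+1}=Y_m\cup\{t_m\}$, that the tree vertices of $C_m$ are confined to $\uc{t_m}_T$ via the witness $u_m$, and that nested strong components coincide) and they all go through. What each approach buys: the paper's proof is short given the necklace and star--comb machinery already developed in the series, and the comb it produces is reused elsewhere; yours is self-contained, needs only Lemma~\ref{lemma: separation props of NSA} and the definition of ends, and constructs the representing ray canonically rather than extracting it from a comb, at the price of the more delicate stabilisation step.
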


We will employ the following star-comb lemma \cite[Lemma~8.2.2]{DiestelBook5} in order to prove Theorem~\ref{thm: normal arborescences are end-faithful}: 
\begin{lemma}[Star-comb lemma]
Let $W$ be an infinite set of vertices in a connected undirected graph $G$.
Then $G$ contains a comb 
with all its teeth in $W$ or a subdivided infinite star with all its leaves in $W$.
\end{lemma}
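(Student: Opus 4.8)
The plan is to reduce to the case of a tree and then run a two-case analysis driven by a notion of \emph{heavy} vertices. First I would choose a spanning tree $T$ of $G$, which exists since $G$ is connected; any comb or subdivided star found inside $T$ is automatically a comb or subdivided star of $G$ with the same teeth or leaves, so it suffices to produce one of the two configurations in $T$. I fix an arbitrary root and, for each vertex $v$, write $T_v$ for the set of all vertices $u$ such that $v$ lies on the path from the root to $u$ (the subtree hanging below $v$). Call $v$ \emph{heavy} if $T_v$ meets $W$ in infinitely many vertices; the root is heavy because $W$ is infinite. The whole argument then splits according to how the vertices of $W$ distribute among the subtrees $T_v$.

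For the star case, suppose some vertex $v$ has infinitely many children $c$ with $T_c \cap W \neq \emptyset$. Choosing one vertex $w_c \in T_c \cap W$ for infinitely many such children and taking the paths from $v$ to each $w_c$ yields internally disjoint paths, since they lie in distinct subtrees $T_c$; this is a subdivided infinite star centred at $v$ with all its leaves in $W$. In the remaining case every vertex has only finitely many children $c$ with $T_c\cap W\neq\emptyset$. Then every heavy vertex $v$ has a heavy child: writing $T_v\cap W$ as the finite union of $\{v\}\cap W$ with the sets $T_c\cap W$ over the finitely many children $c$ meeting $W$, one of these summands must be infinite. Starting from the root and repeatedly passing to a heavy child, I build a ray $R=r_0r_1r_2\ldots$ all of whose vertices are heavy; this ray will be the spine of the comb.

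It then remains to attach infinitely many disjoint teeth ending in $W$. For each $w\in W$ let $\ell(w)$ be the largest index $i$ with $r_i$ on the root--$w$ path, so that the segment of that path from $r_{\ell(w)}$ to $w$ meets $R$ only in $r_{\ell(w)}$. The key point is that $\{\ell(w):w\in W\}$ is unbounded: were it bounded by $N$, then no $w\in W$ would lie in $T_{r_{N+1}}$, contradicting that $r_{N+1}$ is heavy. Hence I can select vertices $w^{(1)},w^{(2)},\ldots\in W$ with strictly increasing values $\ell(w^{(k)})$; the corresponding path-segments are pairwise disjoint, because for $k<m$ the $m$-th segment lies in $T_{r_{\ell(w^{(k)})+1}}$ while the $k$-th leaves $R$ into a different child of $r_{\ell(w^{(k)})}$, and each meets $R$ exactly in its first vertex. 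Together with $R$ these segments form a comb with all its teeth in $W$.

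The main obstacle is precisely the comb case: one must guarantee that the teeth can be chosen pairwise disjoint and attached to the spine at distinct vertices. Choosing the spine to consist of heavy vertices is what forces the attachment indices $\ell(w)$ to be unbounded and thereby makes the disjoint selection possible; without this the vertices of $W$ could in principle all cluster into a single subtree hanging off the spine, so that no comb with spine $R$ would be visible. Everything else---the existence of the spanning tree, the pigeonhole step producing a heavy child, and the verification that the selected segments are disjoint---is routine.
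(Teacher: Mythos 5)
Your proof is correct: the reduction to a rooted spanning tree, the dichotomy between a vertex with infinitely many children whose subtrees meet $W$ (yielding the subdivided star) and the descent along heavy children (yielding the spine), and the unboundedness of the attachment indices $\ell(w)$ forced by heaviness all check out, including the edge case of trivial teeth, which the paper's definition of a comb explicitly permits. Note that the paper itself gives no proof of this lemma---it imports it from Diestel's textbook (Lemma~8.2.2)---and your argument is essentially the standard spanning-tree pigeonhole proof found there, so there is no divergence to report.
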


\begin{proof}[Proof of Theorem~\ref{thm: normal arborescences are end-faithful}]
First, let $R_1$ and $R_2$ be distinct normal rays of $T$ that represent  ends of $D$ in the closure of $U$, say $\omega_1$ and $\omega_2$, respectively. Our goal is to show that $\omega_1$ and $\omega_2$ are distinct ends of $D$. By Lemma \ref{lemma: separation props of NSA}, the rays $R_1$ and $R_2$ have tails in distinct strong components of $D-X$ for $X=V(R_1)\cap V(R_2)$. Hence $X$ witnesses that $R_1$ and $R_2$ are not equivalent; in particular $\omega_1 \neq \omega_2$.

It remains two show that every end $\omega$ in the closure of $U$ is represented by a normal ray of $T$. We claim that there is a necklace $N$ attached to $U$ in $D$ that represents $\omega$. For this consider the auxiliary digraph $D'$ obtained from $D$ by adding a new vertex $v^*$ and adding new edges $v^*u$, one for every $u\in U$. Since $\omega$ is contained in the closure of $U$, we have that $v^*\omega$ is a limit edge of $D'$. Note, that adding $v^*$ does not change the set of ends, in the sense that every end of $D'$ contains a unique end of $D$ as a subset (of rays), and we may identify the ends of $D'$ with the ends of $D$. Now, Proposition~\ref{proposition: char limit edge type II} yields a necklace $N\subseteq D$ that represents $\omega$ such that~$v^*$ sends an edge to every bead of $N$. By the definition of $D'$, we conclude that $N$ is attached to $U$. 

Having $N$ at hand, fix a vertex from $U$ of every bead of $N$ and let $W$ be the set of these fixed vertices. Now, apply the star-comb lemma in the undirected tree underlying $T$ to  $W$. We claim that the return is a comb. Indeed, suppose for a contradiction that we get a star and let $c$ be its centre. By Lemma~\ref{lemma: separation props of NSA}, the finite set $\dc{c}_T$ separates any two leaves of the star, which is impossible because they are all contained in the necklace $N\subseteq D$.

So the return of the star-comb lemma is indeed a comb and we may assume that its spine $R$, considered as a ray in $T$, is a normal ray. Our aim is to prove that $R$ represents $\omega$ and we may equivalently show that $\omega$ is contained in the closure of $V(R)$. So given a finite vertex set $X\subseteq V(D)$, fix teeth $u$ and $u'$ of the comb that are contained in $C(X,\omega)$. These exist because the teeth of the comb are contained in $W$ and the choice of  $W$. By Lemma~\ref{lemma: separation props of NSA}, the strong component $C(X,\omega)$ contains a vertex of $\dc{u}_T\cap \dc{u'}_T$. As this intersection is included in $R$ we have verified that $C(X,\omega)$ contains a vertex of~$R$. This completes the proof that $\omega$ is contained in the closure of $R$ and with it the proof of this theorem. 
\end{proof}

\begin{cor}
Let $D$ be any digraph and let $U\subseteq V(D)$ be any vertex set. If $T$ is a reverse normal spanning arborescence containing $U$, then $T$ is end-faithful for the set of ends in the closure of $U$.
\end{cor}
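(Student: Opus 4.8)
The plan is to deduce this from Theorem~\ref{thm: normal arborescences are end-faithful} by passing to the reverse digraph. By definition, $T$ being a reverse normal spanning arborescence of $D$ means exactly that $\Tv$ is a normal spanning arborescence of $\Dv$, and since $T$ contains $U$ so does $\Tv$. Hence Theorem~\ref{thm: normal arborescences are end-faithful}, applied to $\Tv$ in $\Dv$, tells us that $\Tv$ is end-faithful for the set of ends of $\Dv$ in the closure of $U$. The remaining work is to transport this statement back across the reversal $D \leftrightarrow \Dv$.

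First I would record the reversal dictionary. For every finite vertex set $X$, a set of vertices is strongly connected in $D-X$ if and only if it is strongly connected in $\Dv-X$, since reversing all edges reverses all paths; consequently $D-X$ and $\Dv-X$ have precisely the same strong components as vertex sets. It follows that the assignment $X\mapsto C(X,\omega)$ coming from an end $\omega$ of $D$ is simultaneously a compatible choice of strong components in $\Dv$, realized there by a solid ray built inside the nested components $C(X,\omega)$; this yields a bijection $\Omega(D)\to\Omega(\Dv)$ sending $\omega$ to the end $\omega'$ living in the same components for every $X$. Because membership of an end in the closure of $U$ is defined purely through which strong components $C(X,\cdot)$ meet $U$, this bijection restricts to one between the ends of $D$ in the closure of $U$ and the ends of $\Dv$ in the closure of $U$.

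Next I would match up the rays. A reverse ray $R$ in $D$ is, vertex for vertex, the reverse of a directed ray $\ra{R}$ in $\Dv$ with the same vertex set; in particular the reverse normal rays of $T$ (those issuing from the root) correspond bijectively to the normal rays of $\Tv$. The key point is that, under the bijection above, $R$ represents an end $\omega$ of $D$ exactly when $\ra{R}$ represents $\omega'$ in $\Dv$. Indeed, unwinding the definition of a reverse ray representing $\omega$ shows it is equivalent to $R$ having a tail in $C(X,\omega)$ for every finite $X$; and since $\ra{R}$ and $R$ share their vertex set and the components of $D-X$ and $\Dv-X$ coincide, this is in turn equivalent to $\ra{R}$ having a tail in $C(X,\omega')=C(X,\omega)$ for every $X$, i.e.\ to $\ra{R}$ representing $\omega'$ in $\Dv$.

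With this dictionary the corollary is immediate: given an end $\omega$ of $D$ in the closure of $U$, its image $\omega'$ lies in the closure of $U$ in $\Dv$, so by end-faithfulness of $\Tv$ it is represented by a unique normal ray of $\Tv$; pulling this ray back yields the unique reverse normal ray of $T$ representing $\omega$. The only genuine content beyond bookkeeping—and hence the step I expect to be the main obstacle—is verifying that the paper's notion of a reverse ray representing an end of $D$ coincides with the straightforward notion of the corresponding ray representing the corresponding end of $\Dv$; once strong components are seen to be reversal-invariant, this is a direct unwinding of the definitions.
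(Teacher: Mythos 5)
Your proposal is correct and follows the same route as the paper: apply Theorem~\ref{thm: normal arborescences are end-faithful} to $\Tv\subseteq\Dv$ and then transport the conclusion back via the bijection between ends of $D$ and ends of $\Dv$ induced by the reversal-invariance of strong components. The paper states this correspondence in one sentence where you unwind it in detail, but the content is identical.
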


\begin{proof}
Applying Theorem~\ref{thm: normal arborescences are end-faithful} to the digraph $\Dv$ and the normal arborescence $\Tv\subseteq \Dv$ shows that $\Tv$ is end-faithful for the ends of $\Dv$ in the closure of $U$. Hence the statement is a consequence of the fact that the ends of $D$ in the closure of $U$ correspond bijectively to the ends of $\Dv$ in the closure of $U$, via the map that sends an end $\omega$ of $D$  to the end of $\Dv$  that is represented by some (equivalently every) reverse ray of $D$ that represents $\omega$.
\end{proof}

\section{Arborescences reflect the horizon}\label{section: end space}

\noindent One of the most useful facts about normal spanning trees is that the end space of any normal spanning tree $T$ coincides with the end space of the graph $G$ it spans---even topologically, i.e., the map that assigns to every end of $T$ the end of $G$ that contains it as a subset is a homeomorphism, see~\cite{DiestelOpenProblems}. Hence, in order to understand the end space of $G$ one just needs to understand the simple structure of the tree $T$.

In~\cite{EndsOfDigraphsII} we defined a topological space $|D|$ formed by a digraph $D$ together with its ends and limit edges.  
The \emph{horizon} of a digraph $D$ is the subspace of $|D|$ formed by the ends of $D$ and all the limit edges between them. 
In order to understand the results of this section it is not necessary to know the topology on $|D|$, as the subspace topology on the horizon of $D$ is particularly simple. Let us give a brief description of the subspace topology for the horizon of $D$.

The ground set of the horizon of a digraph $D$ is defined as follows. Take the set of ends $\Omega(D)$ of $D$ together with a copy $[0,1]_\lambda$ of the unit interval for every limit edge  $\lambda$ between two ends of $D$. Now, identify every  end $\omega$ with the copy of $0$ in $[0,1]_\lambda $ for which $\omega$ is the tail of $\lambda$ and  with the copy of $1$ in $[0,1]_{\lambda'}$ for which $\omega$ is the head of~$\lambda'$, for all the limit edges $\lambda $ and $ \lambda'$ between ends of $D$.
For inner points $z_\lambda \in [0,1]_\lambda$ and $z_{\lambda'} \in  [0,1]_{\lambda'}$ of limit edges $\lambda$ and $\lambda'$ between ends of $D$ we say that $z_\lambda$ \emph{corresponds} to $z_{\lambda'}$ if both correspond to the same point of the unit interval.

We describe the topology of the horizon of $D$ by specifying the basic open sets. Neighbourhoods $\Omega_{\varepsilon}(X,\omega)$ of an end $\omega$ are of the following form: Given $X\in \cX(D)$ let $\Omega_{\varepsilon}(X,\omega)$ be the union of

\begin{itemize}
    \item the set of all the ends that live in $C(X,\omega)$ and the points of limit edges between ends that live in $C(X,\omega)$ and
    \item half-open partial edges  $(\varepsilon,y ]_{\lambda}$ respectively $[ y,\varepsilon)_\lambda$ for every limit edge $\lambda$ between ends for which $y$ lives in $C(X,\omega)$. 
\end{itemize}

Neighbourhoods $\Lambda_{\varepsilon,z}(X,\lambda)$ of inner points $z$ of a limit edge $\lambda$ between  ends  are of the following form: Given $X\in \cX(D)$ that separates the endpoints of $\lambda$ let $\Lambda_{\varepsilon,z}(X,\lambda)$ be the union of all the open balls of radius $\varepsilon$ around points $z_{\lambda'}$ with $\lambda'$ a limit edge between ends that lives in the bundle $E(X,\lambda)$ and with $z_{\lambda'}$ corresponding to $z$. Here we make the convention that for limit edges $\lambda$ between ends the $\varepsilon$ of open balls $B_\varepsilon(z)$ of radius $\varepsilon$ around points $z \in \lambda$ is implicitly chosen small enough to guarantee $B_\varepsilon(z)\subseteq \lambda$.

Arborescences do not themselves have ends or limit edges, but there is a natural way to endow an arborescence $T$ in a digraph $D$ with a meaningful horizon. The \emph{solidification} of an arborescence $T \subseteq D$, or of its normal assistant $H$ in $D$, is obtained from $T$ or $H$, respectively, by adding all the edges $wv$ with $vw\in E(T)$. Note that all the rays of $T$ are solid in its solidification and thus represent ends there. Let us define the \emph{horizon} of $T$ as the horizon of the solidification of its normal assistant.

Now suppose that we have fixed a root $r$ of $T$ and suppose that every ray of $D$ is solid in $D$. By Theorem~\ref{thm: normal arborescences are end-faithful}, there exists a well-defined map~$\psi$ that sends every end $\omega$ of $D$ to the end of the solidification $\overline{T}$ of $T$ represented by the unique ray $R\subseteq T$ starting from $r$ that represents $\omega$ in $D$. This map $\psi$ is clearly injective.  Note that the map $\psi$ is also surjective, by our assumption that every ray of $D$ is solid in $D$. Let $\zeta$ denote the map from the set of ends of $\overline{T}$ to that of the solidification $\overline{H}$ of the normal assistant $H$ of $T$ in $D$ that assigns to every end of $\overline{T}$ the end of $\overline{H}$ that contains it as a subset (of rays). This is always bijective:  

\begin{lemma}\label{lemma: Zeta is immer bij.}
Let $D$ be a digraph, $T$ a normal spanning arborescence of $D$ and $H$ the normal assistant of $T$ in $D$. The map $\zeta \colon \Omega(\overline{T}) \to \Omega( \overline{H})$ that assigns to every end of $\overline{T}$ the end of $\overline{H}$ that contains it as a subset is bijective.
\end{lemma}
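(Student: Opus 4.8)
The plan is to reduce the whole statement to Theorem~\ref{thm: normal arborescences are end-faithful} by showing that $T$ is a normal spanning arborescence not merely of $D$ but also of each of the two solidifications $\overline{T}$ and $\overline{H}$. Once that is in place, applying the end-faithfulness theorem with $U=V(T)$ to each solidification identifies both $\Omega(\overline{T})$ and $\Omega(\overline{H})$ with the set of normal rays of $T$, and $\zeta$ becomes the induced ``identity on normal rays'', hence a bijection. The crucial simplification throughout is that $T$ is \emph{spanning}, so $V(\overline{T})=V(\overline{H})=V(T)=V(D)$ and every $T$-path in either solidification is a single edge (a non-trivial path with more edges would have an inner vertex in $T$).

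First I would record two normality facts. The normal assistant of $T$ in $\overline{T}$ can add an edge between $\le_T$-incomparable $v,w$ only if $\overline{T}$ has an edge from $\uc{v}_T$ to $\uc{w}_T$; but every edge of $\overline{T}$ joins $\le_T$-comparable vertices, and an edge with endpoints $v'\in\uc{v}_T$, $w'\in\uc{w}_T$ that are $\le_T$-comparable would force $v$ and $w$ to lie below a common vertex, making them comparable---a contradiction. Thus the normal assistant of $T$ in $\overline{T}$ is $T$ itself, which is acyclic, so $T$ is normal in $\overline{T}$. By the same single-edge argument, any edge that the normal assistant $H'$ of $T$ in $\overline{H}$ adds between incomparable $v,w$ is witnessed by a \emph{cross} edge $v'w'$ of $H$ with $v'\in\uc{v}_T$ and $w'\in\uc{w}_T$; unwinding the definition of $H$, the $T$-path in $D$ witnessing $v'w'\in E(H)$ runs from $\uc{v'}_T\subseteq\uc{v}_T$ to $\uc{w'}_T\subseteq\uc{w}_T$ and hence also witnesses $vw\in E(H)$. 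Therefore $H'\subseteq H$, so $H'$ is acyclic and $T$ is normal in $\overline{H}$.

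Next I would observe that every normal ray of $T$ is solid in $\overline{T}$: the strong components of $\overline{T}-X$ are exactly the components of the underlying tree $T-X$ (tree edges run both ways in $\overline{T}$), and the tail of any normal ray lies in one such component for every finite $X$. Since $\overline{T}\subseteq\overline{H}$, adding edges only merges strong components, so every normal ray is solid in $\overline{H}$ as well, and any two rays equivalent in $\overline{T}$ stay equivalent in $\overline{H}$; this already makes $\zeta$ well defined. Now Theorem~\ref{thm: normal arborescences are end-faithful}, applied to $\overline{T}$ and to $\overline{H}$ with $U=V(T)$ (whose closure is all of $\Omega$, since each $C(X,\omega)$ meets $V(T)$), shows that ``end $\mapsto$ its unique representing normal ray'' is a bijection both from $\Omega(\overline{T})$ and from $\Omega(\overline{H})$ onto the set of normal rays. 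For injectivity of $\zeta$: if $\epsilon_1\neq\epsilon_2$ in $\Omega(\overline{T})$ had $\zeta(\epsilon_1)=\zeta(\epsilon_2)=\eta$, their two distinct representing normal rays would both represent $\eta$ in $\overline{H}$, contradicting uniqueness. For surjectivity: given $\eta\in\Omega(\overline{H})$ with representing normal ray $R$, the ray $R$ is solid in $\overline{T}$ and so represents an end $\epsilon$ of $\overline{T}$ with $\zeta(\epsilon)=\eta$.

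The step I expect to carry the real weight is the second normality fact, namely that $T$ is normal in $\overline{H}$: one must check that computing the normal assistant of $T$ \emph{inside} $\overline{H}$ does not produce cross edges beyond those of $H$, and the clean way to see this is exactly the spanning observation that forces $T$-paths to be single edges, yielding $H'\subseteq H$. Everything afterwards is bookkeeping with Theorem~\ref{thm: normal arborescences are end-faithful} and the monotonicity of strong components under edge addition.
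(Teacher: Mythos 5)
Your proof is correct, and it takes a genuinely different route from the paper's. The paper proves injectivity directly from the separation lemma (Lemma~\ref{lemma: separation props of NSA}): for distinct normal rays $R_1,R_2$ the finite set $\dc{R_1}_T\cap\dc{R_2}_T$ separates their tails in $\overline{H}$. For surjectivity it re-runs the star--comb argument from the proof of Theorem~\ref{thm: normal arborescences are end-faithful}: given a solid ray $R$ of $\overline{H}$, the star--comb lemma applied to $V(R)$ inside $T$ must return a comb (a star centre would contradict the solidity of $R$ via the separation lemma), and the comb's spine is an equivalent normal ray. You instead show that $T$ remains a normal spanning arborescence of $\overline{T}$ and of $\overline{H}$ --- the key observation being that, since $T$ is spanning, every $T$-path is a single edge, so the normal assistant computed inside $\overline{H}$ adds no cross edges beyond those of $H$ --- and then invoke Theorem~\ref{thm: normal arborescences are end-faithful} as a black box for both solidifications. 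Your route is more modular and has a side benefit: the paper's injectivity step applies Lemma~\ref{lemma: separation props of NSA} to paths of $\overline{H}$ rather than of $D$, which tacitly requires exactly the fact you establish, namely that $T$ is normal in $\overline{H}$; your write-up makes this explicit. The cost is the extra (easy but necessary) verification of the two normality claims and of the monotonicity of solidity and equivalence under edge addition, all of which you carry out correctly.
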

\begin{proof}
To see that $\zeta$ is injective, let $\omega_1$ and $\omega_2$ be distinct ends of $\overline{T}$ and let $R_i$ be the ray in $T$ starting from the root of $T$ that represents $\omega_i$ for $i=1,2$.  By Lemma~\ref{lemma: separation props of NSA} the two rays $R_1$ and $R_2$ have a tail in distinct strong components of $\overline{H}-X$ for $X:=\dc{R_1}_T\cap \dc{R_2}_T$; hence $\zeta$ maps the ends $\omega_1$ and $\omega_2$ to distinct ends of~$\overline{H}$.

To see that $\zeta$ is onto, let $\omega$ be an end of $\overline{H}$ and let $R$ be any solid ray in  $\overline{H}$ that represents $\omega$. Our goal is to find a solid ray $R'$ in $\overline{T}$ that is equivalent to $R$ in $\overline{H}$: then the end of $\overline{T}$ that is represented by $R'$ is included in $\omega$ as a subset of rays. For this apply the star-comb lemma in the undirected tree underlying $T$ to the vertex set of $R$. If the return is a comb, then the comb's spine defines the desired ray $R'$.   Indeed, the paths between the comb's spine and its teeth, define (in $\overline{H}$) a family of disjoint directed paths from $R'$ to $R$ and from $R$ to $R'$, hence $R'$ and $R$ are equivalent in~$\overline{H}$. It now suffices to show that the return of the star-comb lemma is always a comb; so suppose for a contradiction that it is  a star with centre $c$ say. Then, by Lemma~\ref{lemma: separation props of NSA}, the down-closure of $c$ in $T$ separates infinitely many vertices of $V(R)$ in $\overline{H}$, contradicting that $R$ has a tail in a strong component of $\overline{H}-X$ for every finite vertex set $X$.
\end{proof}

Note that $\overline{H}$, unlike $\overline{T}$, can have limit edges. We say that $T$ \emph{reflects the horizon} of $D$ if the map $\zeta \circ \psi \colon \Omega(D)\to \Omega(\overline{H})$ extends to a homeomorphism from the horizon of $D$ to that of $\overline{H}$.

The horizon of a normal spanning arborescences might differ from the horizon of the digraph it spans. 
This is due to the nature of connectivity in digraphs: a digraph might have a normal spanning arborescence and many strong components at the same time. For example consider the digraph $D$ depicted in Figure~\ref{fig:CombOfRays}. On the one hand, every ray in $D$ is solid in $D$. On the other hand, consider the unique normal spanning arborescence $T$ that is rooted in the leftmost vertex of the bottom ray. Note that $T$ coincides with its normal assistant. Hence $T$ is normal in $D$ and the end $\omega$ in the horizon of $T$ is a limit point of the ends $\omega_i$ in the horizon of $T$. In contrast to that, all points in the horizon of $D$ are isolated as every end lives in exactly one strong component of~$D$.
\hspace{1cm}
\begin{figure}[h]
\centering
\def\svgwidth{0.9\columnwidth}
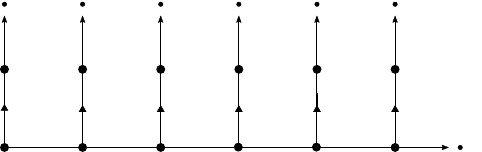
\caption{A digraph $D$ with a normal spanning arborescence $T$ where the horizon of $T$ differs from that of $D$. Every undirected edge in the figure represents a pair of inversely directed edges. Every line that ends with an arrow stands for a symmetric ray.} 
\label{fig:CombOfRays}
\end{figure}

However, it turns out that the horizon of a digraph $D$ coincides with the horizon of any normal spanning arborescence of $D$ if $D$ belongs to an important class of digraphs, namely to the class of solid digraphs: 

\begin{mainresult}\label{thm: normal arborescence and end space}
Every normal spanning arborescence of a solid digraph reflects its horizon. 
\end{mainresult}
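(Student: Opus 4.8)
The plan is to extend $\zeta\circ\psi$ to a map $\Phi$ from the horizon of $D$ to that of $\overline H$ by sending every end $\omega$ to $\zeta(\psi(\omega))$ and every limit edge $\omega\eta$ of $D$ to the pair $\zeta(\psi(\omega))\,\zeta(\psi(\eta))$, extended linearly on the interval copies. I would then prove the theorem by showing that $\Phi$ is a continuous bijection and invoking a soft topological principle: since $D$ is solid, the horizon of $D$ is compact (it is a closed subspace of the compact space $|D|$; see~\cite{EndsOfDigraphsII}), while the horizon of $\overline H$ is Hausdorff, and a continuous bijection from a compact space to a Hausdorff space is automatically a homeomorphism. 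This reduces the theorem to three points: (a) $\Phi$ is well defined, i.e.\ it sends limit edges of $D$ to limit edges of $\overline H$; (b) $\Phi$ is continuous; (c) $\Phi$ is onto. Injectivity is immediate, since $\zeta\circ\psi$ is a bijection on ends by Lemma~\ref{lemma: Zeta is immer bij.} and the discussion preceding it, and limit edges are determined by their ordered pairs of endpoints.

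The engine for (a) and (b) is a containment of strong components. Because $T$ is spanning, every $T$-path in $D$ is a single edge, so the cross edges of the normal assistant $H$ are exactly the pairs $vw$ of $\le_T$-incomparable vertices for which $D$ has an edge from $\uc{v}_T$ to $\uc{w}_T$; together with $T\subseteq D$ and the reverse tree edges added in the solidification, this yields that for every down-closed finite $X$ reachability in $D-X$ implies reachability in $\overline H-X$. Hence the strong component of $D-X$ containing a given vertex is contained in the strong component of $\overline H-X$ containing it, so $C(X,\omega)$ sits inside the matching strong component of $\overline H-X$. This containment gives continuity of $\Phi$ by reusing the \emph{same} $X$: a basic neighbourhood $\Omega_\varepsilon(X,\zeta\psi\omega)$ of an end, or $\Lambda_{\varepsilon,z}(X,\lambda)$ of a limit-edge point, pulls back correctly because ends living in $C(X,\omega)$ and limit edges living in the bundle $E(X,\omega\eta)$ are mapped to ends and limit edges living in the matching component respectively bundle of $\overline H$. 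For (a), given $\omega\eta\in\Lambda(D)$ and a down-closed finite $X$ separating $\zeta\psi\omega$ and $\zeta\psi\eta$ in $\overline H$, the same containment shows $X$ also separates $\omega$ and $\eta$ in $D$; a witnessing edge $ab\in E(D)$ from $C(X,\omega)$ to $C(X,\eta)$ then has its endpoints in distinct strong components of $\overline H-X$, which forces $a$ and $b$ to be $\le_T$-incomparable and hence $ab\in E(H)\subseteq E(\overline H)$, so that $\zeta\psi\omega\,\zeta\psi\eta$ is a limit edge of $\overline H$.

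Point (c), surjectivity, is where I expect the real work and where solidity is indispensable. Surjectivity onto ends is clear, so the issue is to show that $\overline H$ carries \emph{no spurious limit edges}: every limit edge $\zeta\psi\omega\,\zeta\psi\eta$ of $\overline H$ must come from a limit edge $\omega\eta$ of $D$. The obstacle is that the solidification adds backward tree edges that $D$ need not contain, so the strong components of $\overline H-X$ can be strictly coarser than those of $D-X$; a witnessing $\overline H$-edge lands between the $\overline H$-components of $\zeta\psi\omega$ and $\zeta\psi\eta$, but a priori not between the smaller $D$-components $C(X,\omega)$ and $C(X,\eta)$. To close this gap I would prove a confinement lemma: for every down-closed finite $X$ there is a down-closed finite $X^*\supseteq X$ such that the strong component of $\overline H-X^*$ containing $\zeta\psi\omega$ meets no strong component of $D-X$ other than $C(X,\omega)$. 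Here solidity enters decisively: $D-X$ has only finitely many strong components, so if confinement failed, some fixed foreign component $C_i$ would be met by the shrinking $\overline H$-components for arbitrarily large $X^*$; a compactness/pigeonhole argument (the meeting points escape to infinity inside $C_i$ and converge to an end living in $C_i$) would then produce an end of $D$ living in $C_i$ whose image is never separated from $\zeta\psi\omega$ in $\overline H$, contradicting the injectivity of $\zeta\circ\psi$ on ends. With confinement in hand, the cross edge of $\overline H$ that witnesses the limit edge (for a separator refined past $X^*$) descends, via the correspondence between cross edges and $D$-edges between up-closures, to a genuine $D$-edge between $C(X,\omega)$ and $C(X,\eta)$; as $X$ was an arbitrary separator, $\omega\eta\in\Lambda(D)$. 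Assembling (a)--(c) together with the compact-to-Hausdorff principle then shows that $\zeta\circ\psi$ extends to a homeomorphism, i.e.\ that $T$ reflects the horizon of $D$.
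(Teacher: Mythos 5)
Your architecture is sound and its first half coincides with the paper's: well-definedness of $\Phi$ on limit edges and its continuity are proved essentially as in the paper (the forward direction of Lemma~\ref{lemma: Zeta erhaelt  limit kanten} and Lemma~\ref{lemma: horizon map continuous and open}~(ii)), and you correctly identify that the whole difficulty is to rule out spurious limit edges of $\overline H$, which forces you to confine the strong components of $\overline H-X^*$ inside those of $D-X$. That confinement statement is exactly Lemma~\ref{lemma: horizon map continuous and open}~(i), and the genuine gap is in your sketch of its proof. The step ``the meeting points escape to infinity inside $C_i$ and converge to an end living in $C_i$'' is not justified: an infinite strongly connected digraph need not contain any directed ray (take a centre $c$ with edges $c\to a_i\to b_i\to c$), so an unbounded vertex set in $C_i$ has no intrinsic reason to converge to anything, and even granted an end in the closure of $C_i$ you still owe the argument that its image in $\overline H$ is never separated from $\zeta(\psi(\omega))$. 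The repair has to go through the arborescence: for down-closed $X^*$ one has $C(X^*,\omega')=\uc{s}_T$ with $s$ the first vertex of the normal ray $R_{\omega'}$ outside $X^*$, so failure of confinement yields vertices $w_v\in\uc{v}_T\cap C_i$ for every $v$ on $R_{\omega'}$; one then applies the star--comb lemma to $\{w_v\}$ in $T$, rules out the star via Lemma~\ref{lemma: separation props of NSA} together with solidity (infinitely many leaves pairwise separated by the finite set $\dc{c}_T$ would force infinitely many strong components of $D-(X\cup\dc{c}_T)$), and only the resulting comb produces the contradicting end. None of this is in your sketch, and it is the heart of the theorem; the paper's route to the same lemma is shorter --- for down-closed $X$, normality gives that each foreign strong component of $D-X$ receives at most one tree edge from $C(X,\omega)$, and solidity gives that there are only finitely many foreign components, so only finitely many ``bad'' vertices lie on $R_{\omega'}$.

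A secondary concern is the compact-to-Hausdorff shortcut by which you avoid checking openness of $\Phi$ directly. It rests on two facts not established in this paper: that the horizon is a closed subspace of $|D|$ (so that compactness of $|D|$ for solid $D$ transfers to it), and that the horizon of $\overline H$ is Hausdorff, which itself requires first observing that $\overline H$ is solid (true, since a normal arborescence in a solid digraph is locally finite). These are plausible and may be provable from the second paper of the series, but they are load-bearing and cited blind. Moreover the shortcut buys little: the confinement lemma you need anyway for surjectivity is precisely the ingredient from which the paper deduces continuity of the inverse directly, so once the real work is done the topological detour is superfluous.
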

The proof of this will be a consequence of the following two lemmas:

\begin{lemma}\label{lemma: horizon map continuous and open}
Let $D$ be a solid digraph, let $T$ be a normal spanning arborescence of $D$ and $H$ the normal assistant of $T$ in $D$. For  ends $\omega$ of $D$ and $\omega'$ of $\overline{H}$, with $\omega' = \zeta(\psi(\omega))$ the following statements hold: 
\begin{enumerate}
    \item For every finite vertex set $X \subseteq V(D)$ there is a finite vertex set $X' \subseteq V(\overline{H})$ such that the vertex set of  $ C( X' , \omega' ) $ is contained in  that of $ C(X, \omega)$.
    \item  For every finite vertex set $X' \subseteq V(\overline{H})$ there is a finite vertex set $X \subseteq V(D)$ such that the vertex set of $ C(X,\omega) $ is contained in that of  $ C(X', \omega')$.
\end{enumerate}
\end{lemma}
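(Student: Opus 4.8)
The plan is to prove the two inclusions separately, exploiting the fact that $\overline{H}$ is obtained from $\overline{T}$ by adding only the edges witnessed by $T$-paths between incomparable vertices, together with their reverses. Throughout, I will identify the end $\omega$ of $D$ with the normal ray $R\subseteq T$ that represents it (via Theorem~\ref{thm: normal arborescences are end-faithful}), so that $\omega'$ is simply the end of $\overline{H}$ represented by this same ray $R$. The governing intuition is that separating $\omega$ from the rest of $D$ by a finite set $X$ and separating $\omega'$ from the rest of $\overline{H}$ by a finite set $X'$ are, up to down-closures, the same operation, because Lemma~\ref{lemma: separation props of NSA} tells us that in a normal arborescence the set $\dc{v}_T\cap\dc{w}_T$ already separates incomparable $v,w$ in the \emph{original} digraph $D$.

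For statement~(i), I would start from a finite $X\subseteq V(D)$ and pass to the finite \emph{down-closed} set $X^\ast:=\bigcup_{x\in X}\dc{x}_T$, which is finite because $T$ is an arborescence and each down-closure is a finite chain. I would then set $X'$ to be an initial segment of the ray $R$ large enough that $R$ leaves $X^\ast$ afterwards, i.e. $X'=\dc{v}_T$ for a suitable vertex $v\in R$ lying strictly above every element of $X^\ast$ on the normal order. The claim is that $C(X',\omega')\subseteq C(X,\omega)$ as vertex sets. To see this, take any vertex $u\in C(X',\omega')$; since $\overline{H}$ is strongly connected on this component, there are paths in $\overline{H}$ between $u$ and a tail of $R$ avoiding $X'$. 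Using that every edge of $H$ not in $T$ is witnessed by a $T$-path in $D$ (and every such edge's reverse corresponds to the reverse $T$-path), I would translate these $\overline{H}$-paths into walks in $D$ that avoid $X$, invoking Lemma~\ref{lemma: separation props of NSA} to guarantee that the witnessing $T$-paths cannot dip below the relevant down-closures and hence cannot meet $X$. This places $u$ in the same strong component of $D-X$ as the tail of $R$, namely $C(X,\omega)$.

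For statement~(ii), I would run the argument in the reverse direction: given finite $X'\subseteq V(\overline{H})$, again replace it by its $T$-down-closure $X'^\ast$ (finite for the same reason), and I expect to be able to take $X:=X'^\ast$ directly, or an initial segment of $R$ just beyond it. Here the key point is the opposite translation: a path in $D-X$ between two vertices of $C(X,\omega)$ can be converted into a path in $\overline{H}-X'$, because every edge of $D$ between $\le_T$-incomparable vertices induces (by the definition of the normal assistant) a corresponding edge of $H$, while edges of $D$ between comparable vertices are absorbed into the $T$-structure that $\overline{H}$ retains in both orientations. Thus any $D-X$ connection lifts to an $\overline{H}-X'$ connection, giving $C(X,\omega)\subseteq C(X',\omega')$.

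The main obstacle, and the step deserving the most care, is the faithful back-and-forth translation of strong connectivity between $D-X$ and $\overline{H}-X'$ when the separating sets are replaced by their down-closures. The delicate direction is~(i): an edge of $H$ outside $T$ is only a \emph{single} auxiliary edge, but it stands for a whole $T$-path in $D$ whose interior vertices I must certify do not re-enter $X$. This is exactly where Lemma~\ref{lemma: separation props of NSA} does the heavy lifting, since it controls precisely where such $T$-paths between incomparable vertices can travel relative to down-closures; I would need to choose the down-closed separators carefully so that every witnessing $T$-path stays above $X$. Once this translation is set up cleanly, both inclusions follow, and I would expect the bookkeeping with the reverse edges (present because we work in the solidification $\overline{H}$ rather than in $H$) to be routine but worth stating explicitly.
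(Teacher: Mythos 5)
Your part~(ii) is essentially the paper's argument and is fine: taking $X:=\dc{X'}_T$, the set $X$ is finite and down-closed, and $D$-paths inside $C(X,\omega)$ lift to $\overline{H}-X'$ (in fact one can argue more cleanly that Lemma~\ref{lemma: separation props of NSA} forces all of $C(X,\omega)$ into a single up-closure $\uc{s}_T$ with $s$ minimal in $T-X$, and this up-closure is strongly connected in $\overline{H}$ and disjoint from $X\supseteq X'$).

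Part~(i), however, has a genuine gap, and the strategy itself is the problem. You propose to take $v\in R$ above the down-closure $\dc{X}_T$ and then to certify $C(X',\omega')\subseteq C(X,\omega)$ by translating $\overline{H}$-paths into $D$-walks avoiding $X$. This translation cannot be carried out: $\overline{H}$ has strictly more connectivity than $D-X$. A reverse tree edge $wv$ of the solidification has no counterpart in $D$ at all, and an auxiliary edge of $H$ between incomparable $v,w$ is witnessed by an edge of $D$ from $\uc{v}_T$ to $\uc{w}_T$ whose endpoints may lie far above $v$ and $w$, with no way in $D-X$ to descend back to $w$. Concretely, $C(X',\omega')$ will be an up-closure $\uc{v}_T$, which is always strongly connected in $\overline{H}$ but need not lie inside a single strong component of $D-X$: branches of $T$ hanging off $R$ above $v$ can enter other strong components. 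This is precisely where solidity of $D$ must enter, and your argument for~(i) never uses it --- indeed (i) is false without it, as the comb-of-rays example in Section~5 shows. The paper's proof instead \emph{chooses $v$ so that $\uc{v}_T\subseteq C(X,\omega)$ holds outright}: with $X$ down-closed, normality implies that every strong component of $D-X$ other than $C(X,\omega)$ receives at most one edge of $T$ from $C(X,\omega)$; since $D$ is solid there are only finitely many such components, hence only finitely many $T$-edges leave $C(X,\omega)$, hence only finitely many ``bad'' vertices on $R_{\omega'}$ whose up-closure meets $V(T)\setminus C(X,\omega)$. Picking $v$ on $R_{\omega'}$ above all bad vertices and with its subray in $C(X,\omega)$, one sets $X':=\dc{v}_T\setminus\{v\}$ and is done. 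Without some version of this counting argument your proof of~(i) does not go through.
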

\begin{proof}
(i) Let $X$ be any finite vertex set of $D$. We may assume that $X$ is down-closed with respect to $\leq_T$. We write $R_{\omega'}$ for the unique ray of $T$ that starts from the root of $T$ and represents $\omega'$, Theorem~\ref{thm: normal arborescences are end-faithful}. Note that since $D$ is solid every ray in $T$ is solid in~$D$. We will find a vertex $v \in R_{\omega'}$ such that the up-closure of  $v$ in $T$ is contained in $C(X,\omega)$; then $X':= \dc{v}_T \setminus \{v\}$ is as desired by the separation properties of normal arborescences, Lemma~\ref{lemma: separation props of NSA}.

For this, we call a vertex $v\in R_{\omega'}$  \emph{bad} if $\lfloor v \rfloor_T$ meets $V(T)\setminus C(X, \omega)$. Let us show that $R_{\omega'}$ has  only finitely many bad vertices. As $T$ is normal in $D$ and $X$ is down-closed we have that every strong component of $D-X$ other than $C(X,\omega)$ receives at most one edge of $T$ from $C(X, \omega)$. Now, using that $D-X$ has only finitely many strong components, it follows that only finitely many edges of $T$ leave $C(X,\omega)$. Let $B$ be the finite set of all tails of edges of $T$ that leave $C(X,\omega)$. Then also $\dc{B}_T$ is finite and no vertex of $R_{\omega'} - \dc{B}_T$ is bad. This shows that there are indeed only finitely many bad vertices on $R_{\omega'}$. Now, choosing a vertex $v$ on $R_{\omega'}$ higher than any bad vertex and high enough so that the subray of $R_{\omega'}$ that starts at $v$ is included in $C(X,\omega)$ gives $\lfloor v  \rfloor_T  \subseteq C(X,\omega)$. 

(ii) Let $X'$ be any finite vertex set of $\overline{H}$. By the separation properties of normal arborescences, Lemma~\ref{lemma: separation props of NSA}, the finite vertex set $X:= \dc{X'}_T$ is as desired. 
\end{proof}

\begin{lemma}\label{lemma: Zeta erhaelt  limit kanten}
Let $D$ be a solid digraph, $T$ a normal spanning arborescence of $D$ and $H$ the normal assistant of $T$ in $D$. Then $\omega\eta$ is a limit edge of $D$ if and only if $\omega'\eta'$ is a limit edge of $\overline{H}$, where $\omega'$ and $\eta'$ is the image under the map $\zeta \circ \psi$ of $\omega$ and $\eta$, respectively.  
\end{lemma}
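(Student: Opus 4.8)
The plan is to prove both implications by translating, for each fixed finite separator on one side, the existence of an edge across the relevant strong components into an edge across the corresponding components on the other side, using the two containment statements of Lemma~\ref{lemma: horizon map continuous and open} to pass between the components $C(X,\omega)$ of $D$ and the components $C(X',\omega')$ of $\overline H$. Two simplifications drive everything. First, since $T$ spans $D$ we have $V(\overline H)=V(D)=V(T)$, and a $T$-path is then a single edge; hence for $\le_T$-incomparable $v,w$ one has $vw\in E(H)$ if and only if $D$ contains an edge from $\uc{v}_T$ to $\uc{w}_T$. Second, in the definition of a limit edge I may restrict attention to finite separators that are down-closed with respect to $\le_T$: replacing $X$ by $\dc{X}_T$ keeps it finite, keeps $\omega$ and $\eta$ separated (components only shrink), and any edge found between the smaller components is still an edge between $C(X,\omega)$ and $C(X,\eta)$. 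I will use repeatedly that if $u\le_T u'$ with $u\notin X$ and $X$ down-closed, then the whole tree-path from $u$ to $u'$ avoids $X$; as this path is bidirectional in $\overline H$, the vertices $u$ and $u'$ then lie in the same strong component of $\overline H-X$.

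For the forward implication, assume $\omega\eta$ is a limit edge of $D$ and let $X'$ be any down-closed finite separator of $\omega'$ and $\eta'$ in $\overline H$. Taking $X:=\dc{X'}_T=X'$ and applying Lemma~\ref{lemma: horizon map continuous and open}(ii) to both ends gives $C(X,\omega)\subseteq C(X',\omega')$ and $C(X,\eta)\subseteq C(X',\eta')$; as the two right-hand sides are disjoint, so are the left-hand ones, so $X$ separates $\omega$ and $\eta$ in $D$. Hence $D$ has an edge $cd$ from $C(X,\omega)$ to $C(X,\eta)$. Then $c$ and $d$ lie in the distinct components $C(X',\omega')$ and $C(X',\eta')$ of $\overline H-X'$, so they must be $\le_T$-incomparable (otherwise the bidirectional tree-path between them would merge the two components). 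By the first simplification $cd\in E(H)\subseteq E(\overline H)$, and it runs from $C(X',\omega')$ to $C(X',\eta')$. As $X'$ was arbitrary, $\omega'\eta'$ is a limit edge of $\overline H$.

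For the backward implication, assume $\omega'\eta'$ is a limit edge of $\overline H$ and let $X$ be any down-closed finite separator of $\omega$ and $\eta$ in $D$. Using Lemma~\ref{lemma: horizon map continuous and open}(i) for each of $\omega,\eta$ and taking the down-closure of the union, I obtain a down-closed finite $X'$ with $C(X',\omega')\subseteq C(X,\omega)$ and $C(X',\eta')\subseteq C(X,\eta)$; these are disjoint, so $X'$ separates $\omega'$ and $\eta'$ in $\overline H$, and there is an edge $ab$ of $\overline H$ from $C(X',\omega')$ to $C(X',\eta')$. Now $ab$ cannot be a tree edge or the reverse of one, since its endpoints lie in distinct components of $\overline H-X'$ while a (reverse) tree edge together with its solidification partner would put them in a common component; hence $ab$ is an incomparability edge of $H$, witnessed by an edge $a'b'\in E(D)$ with $a'\in\uc{a}_T$ and $b'\in\uc{b}_T$. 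By the bidirectionality remark, $a'$ lies in the same component of $\overline H-X'$ as $a$ and $b'$ in the same as $b$, so $a'\in C(X',\omega')\subseteq C(X,\omega)$ and $b'\in C(X',\eta')\subseteq C(X,\eta)$. Thus $a'b'$ is an edge of $D$ from $C(X,\omega)$ to $C(X,\eta)$, and since $X$ was arbitrary, $\omega\eta$ is a limit edge of $D$.

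The main obstacle is exactly this last conversion: an edge of $\overline H$ between the two components corresponds only to an edge of $D$ between the up-closures $\uc{a}_T$ and $\uc{b}_T$, whose endpoints $a',b'$ could a priori sit in later strong components of $D-X$ than $a$ and $b$. The resolution is that, because $X'$ is down-closed and tree edges are bidirectional in $\overline H$, moving up the tree never leaves the current strong component of $\overline H-X'$; combined with the containment $C(X',\cdot)\subseteq C(X,\cdot)$ this forces $a'$ and $b'$ back into the correct components of $D-X$. I would verify only the routine points that $\dc{X}_T$ and the relevant unions remain finite and that enlarging a separator shrinks the component containing a fixed end.
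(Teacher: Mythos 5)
Your proof is correct and follows essentially the same route as the paper's: both directions hinge on Lemma~\ref{lemma: horizon map continuous and open} together with the observation that, $T$ being spanning, the edges of $H$ between $\le_T$-incomparable vertices correspond exactly to edges of $D$ between their up-closures. The only organisational difference is that where you reduce to $\le_T$-down-closed separators and note that moving up the tree never leaves a strong component of $\overline{H}-X'$, the paper instead takes $X'=(\dc{v_\omega}_T\cup\dc{v_\eta}_T)\setminus\{v_\omega,v_\eta\}$ so that $C(X',\omega')$ and $C(X',\eta')$ are literally the up-closures $\uc{v_\omega}_T$ and $\uc{v_\eta}_T$.
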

\begin{proof}
We write $\omega'$ and $\eta'$ for the image under the map $\zeta \circ \psi$ of $\omega$ and $\eta$, respectively. Let us first show that $\omega' \eta'$ is a limit edge of $\overline{H}$ if $\omega\eta$ is a limit edge of $D$. For this let any finite vertex set $X'$ that separates $\omega'$ and $\eta'$ in $\overline{H}$ be given. Our goal is to find an edge in $\overline{H}$ from $C(X', \omega' )$ to $C(X' , \eta ')$. By Theorem~\ref{thm: normal arborescences are end-faithful} there are rays $R_\omega$ and $R_\eta$ in $T$ that represent $\omega$ and $\eta$ in $D$, respectively. As $\omega \eta$ is a limit edge of $D$, there is an edge in $D$ from $\uc{v_\omega}_T$ to $\uc{v_\eta}_T$ for any two $\leq_T$-incomparable vertices $v_\omega \in R_\omega$ and $v_\eta \in R_\eta$. Now, choose such vertices $v_\omega$ and $v_\eta$  so that both   $\uc{v_\omega}_T$ and $\uc{v_\eta}_T$ avoid $X'$. In $\overline{H}$ both $\uc{v_\omega}_T$ and $\uc{v_\eta}_T$ are strongly connected, by the definition of the solidification. And as $R_\omega$ and $R_\eta$ have a tail in  $C(X', \omega' )$ and $C(X' , \eta')$, respectively, we have that $\uc{v_\omega}_T \subseteq C(X', \omega' )$ and $\uc{v_\eta}_T \subseteq C(X', \eta' )$. In particular, $\uc{v_\omega}_T \cap \uc{v_\eta}_T =\emptyset$. Consequently, any edge  in $D$ from $\uc{v_\omega}_T$ to  $\uc{v_\eta}_T$ has $\leq_T$-incomparable endvertices and therefore is an edge in $\overline{H}$ from  $C(X', \omega' )$ to $C(X' , \eta ')$.

Now, let $\omega' \eta'$ be a limit edge of $\overline{H}$. We write $\omega$ and $\eta$ for the unique preimage under $\zeta \circ \psi$ of $\omega'$ and $\eta'$, respectively.  We  show that $\omega \eta$ is a limit edge in $D$. For this let any finite vertex set $X$ that separates $\omega$ and $\eta$ in $D$ be given. Our goal is to find an edge in $D$ from $C(X, \omega)$ to $C(X , \eta)$. As in the proof of Lemma~\ref{lemma: horizon map continuous and open}, there are vertices $v_\omega$ and $v_\eta$ such that $\uc{v_\omega}_T \subseteq C(X,\omega)$ and $\uc{v_\eta}_T \subseteq C(X,\eta)$. Let~\hbox{$X'=( \dc{v_\omega}_T \cup \dc{v_\eta}_T)\setminus \{v_\omega,v_\eta\}$} and consider $C(X', \omega')$ and $C(X' , \eta')$ in $\overline{H}$. Using that $T$ is normal in $D$ it is easy to show that  $C(X', \omega') =\uc{v_\omega}_T $ and $C(X' , \eta') = \uc{v_\eta}_T$. As $\omega'\eta'$ is a limit edge of $\overline{H}$ there is an edge $e$ in $\overline{H}$ from  $C(X', \omega') $ to $C(X' , \eta')$. Furthermore, the endpoints of $e$ are $\leq_T$-incomparable.  Now, $e$ was added to $T$ in the definition of $H$ because there is an edge $f$ of $D$ from $\uc{v_\omega}_T$ to  $\uc{v_\eta}_T$ and this edge $f$ is as desired.
\end{proof}

\begin{proof}[Proof of Theorem~\ref{thm: normal arborescence and end space}]
By Lemma~\ref{lemma: Zeta is immer bij.} and its preceding text, the map $\zeta \circ \psi$ is a bijection. We extend this map to a bijection $\Theta$ between the horizon of $D$ and that of $\overline{H}$ as follows. Let $y$ be an inner point of a limit edge $\omega\eta$ between ends of~$D$. We write $\omega'$ and $\eta'$ for the image under $\zeta \circ \psi$ of $\omega$ and $\eta$, respectively. By Lemma~\ref{lemma: Zeta erhaelt  limit kanten} we have that $\omega' \eta'$ is a limit edge of $\overline{H}$. Then we declare $\Theta(y):= y'$ for $y'$ the point that corresponds to $y$ on $\omega'\eta'$. Again, by Lemma~\ref{lemma: Zeta erhaelt  limit kanten}, the map $\Theta$ is bijective; we claim that $\Theta$ is even a homeomorphism. Indeed, using Lemma~\ref{lemma: horizon map continuous and open}~(ii) it is straightforward to check that $\Theta$ is continuous and using Lemma~\ref{lemma: horizon map continuous and open}~(i) it is straightforward to check that the inverse of $\Theta$ is continuous. 
\end{proof}

\section{Existence of arborescences}\label{section: existence} \noindent 
Not every digraph with a vertex that can reach all the other vertices has a normal spanning arborescence, for example any digraph $D$ obtained from an uncountable complete graph by replacing every edge by its two orientations as separate directed edges has none. Indeed, if $T$ is a normal arborescence of $D$, then any two of its vertices must be contained in the same ray starting from the root of $T$. Hence $T$ cannot be spanning.
In this section we give a Jung-type existence criterion for normal spanning arborescence.

For a digraph $D$ we call a  set $U \subseteq V(D)$ of vertices \emph{dispersed} in $D$ if there is no comb in $D$  with all its teeth in $U$.  Our main result of this section reads as follows:

\begin{mainresult}\label{thm: normal arborescence existence}
Let $D$ be any digraph, $U\subseteq V(D)$ and suppose that $r\in V(D)$ can reach all the vertices in $U$. If $U$ is a countable union of dispersed sets, then $D$ has a normal arborescence that contains $U$ and is rooted in~$r$.
\end{mainresult}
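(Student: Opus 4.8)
The plan is to build the arborescence by an $\omega$-indexed recursion that absorbs one dispersed set at a time, certifying normality at every stage through the sensitive-order characterisation of Lemma~\ref{lemma: normal order and linear normal order}. First I would record two routine reductions. A finite union of dispersed sets is again dispersed, since a directed comb with teeth in the union has, by the pigeonhole principle, infinitely many teeth in one of the sets; so after replacing $U_n$ by $U_0\cup\dots\cup U_n$ I may assume $U_0\subseteq U_1\subseteq\cdots$. As adjoining a single vertex to a dispersed set keeps it dispersed, I may also assume $r\in U_0$.

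The skeleton is a chain $T_0\subseteq T_1\subseteq\cdots$ of normal arborescences rooted in $r$, together with sensitive orders $\preceq_0\subseteq\preceq_1\subseteq\cdots$, such that $U_n\subseteq V(T_n)$ and each $T_n$ is a subarborescence of $T_{n+1}$. Setting $T:=\bigcup_n T_n$ and $\preceq:=\bigcup_n\preceq_n$, I would verify that $T$ is a normal arborescence rooted in $r$ containing $U=\bigcup_n U_n$. That $T$ is an arborescence is clear, as any finite witness (a cycle in the underlying graph, or an $r$--$v$ path) lives in some $T_n$. To see that $T$ is normal I would check that $\preceq$ is a sensitive order and invoke Lemma~\ref{lemma: normal order and linear normal order}: it is a linear extension of $\le_T$, and both branch and path sensitivity transfer from the $\preceq_n$ because every violating configuration is finite. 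A branch-sensitivity violation involves three vertices $v,w,v'$ lying in a common $T_n$ with $v'\in\uc{v}_{T_n}=\uc{v}_T\cap V(T_n)$, and a path-sensitivity violation is witnessed by a $T$-path whose endvertices lie in some $V(T_n)$ and whose inner vertices avoid $V(T)\supseteq V(T_n)$, making it a $T_n$-path.

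The heart of the proof is the recursion step, which I would isolate as an \emph{extension lemma}: given a normal arborescence $S$ rooted in $r$ with a sensitive order $\preceq$, and a dispersed set $U'$ all of whose vertices are reachable from $r$, there is a normal arborescence $S'\supseteq S$ (with $S$ a subarborescence) containing $U'$ and a sensitive order extending $\preceq$. I would prove this by well-ordering $U'\setminus V(S)$ and feeding the vertices in one at a time. To add a still-missing vertex $u$, I attach a path $P$ from $V(S)$ to $u$ whose interior avoids the current tree, choosing its first vertex $t$ to be $\preceq$-maximal among the tree vertices from which $u$ is so reachable, and I insert the new branch into $\preceq$ immediately above $t$, before the rest of $\uc{t}_S$. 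With this choice branch sensitivity is immediate, since every up-closure stays a $\preceq$-interval; and the two ways path sensitivity could fail are ruled out. A $T$-path reaching the new branch from a $\preceq$-larger vertex $b$ would, composed with the branch, show that $b\succ t$ reaches $u$ off the tree, contradicting the maximal choice of $t$; and a $T$-path leaving the branch towards a $\preceq$-smaller vertex $b$ incomparable to $t$ would, composed with the branch from $t$, be an $S$-path from $t$ to $b$ violating path sensitivity of $\preceq$ already on $S$. Limit stages are normal by the union argument above, so the normal assistant stays acyclic throughout.

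The main obstacle is pushing this transfinite recursion through and pinning down where dispersedness is indispensable. Two points need care. The existence of a $\preceq$-maximal admissible attachment $t$ for a single vertex is not automatic, so the recursion must be organised—e.g. depth-first, always attaching below the current branch—so that the sensitive order genuinely extends; and at limit stages one must ensure that the ascending union of the branches built so far strands no vertex of $U'$. This is exactly what the hypothesis buys: if infinitely many vertices of $U'$ hung off a single branch of the growing arborescence, that branch together with the finite directed paths to those vertices would be a directed comb in $D$ with all teeth in $U'$, contradicting that $U'$ is dispersed. Thus the no-directed-comb condition both forces the recursion to terminate having captured every vertex of $U'$ and prevents the accumulation of incomparable vertices that would otherwise create a cycle in the normal assistant, cf.\ Lemma~\ref{lemma: cycle in normal assistant}. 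Verifying these limit-stage details, and the interplay between the well-ordering of $U'\setminus V(S)$ and the depth-first attachment rule, is the part I expect to require the most work.
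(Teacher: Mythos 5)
Your proposal follows essentially the same route as the paper: a transfinite recursion that adds one missing vertex of $U$ at a time along a path attached at a $\preceq$-maximal admissible vertex of the tree built so far, inserts the new branch immediately above the attachment point in the sensitive order, and certifies normality via Lemma~\ref{lemma: normal order and linear normal order}; your two-case path-sensitivity check at successor steps is exactly the paper's. The organisational difference (an outer $\omega$-recursion over nested dispersed sets with an inner transfinite extension lemma, versus the paper's single interleaved well-ordering of $U$ in which every proper initial segment is dispersed) is immaterial.

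There is, however, a genuine gap precisely at the point you flag but do not close. The existence of a $\preceq$-maximal attachment vertex for the next path is equivalent to the invariant that the sensitive order admits no infinite strictly ascending sequence, and this invariant has to be carried as an explicit induction hypothesis and re-verified at every limit stage; your remark that the recursion should be organised ``depth-first, always attaching below the current branch'' does not substitute for this. The paper verifies the invariant at limit ordinals by applying the star-comb lemma to a putative infinite ascending sequence, which produces two cases. You treat only the comb case, and even there you implicitly need a further invariant -- that every vertex of the current tree has a vertex of $U$ in its up-closure -- because the ascending sequence consists of arbitrary tree vertices, not of vertices of $U'$, and only via that cofinality can the undirected comb be upgraded to a directed comb with all teeth in a dispersed set. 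The star case is not excluded by dispersedness at all: it is excluded by yet another invariant, namely that among vertices with a common tree-predecessor the sensitive order reverses the order in which they were added to the tree (the paper's condition (ii)), combined with branch sensitivity and a minimal-stage argument. Your insertion rule does in fact produce this sibling invariant, but you never identify it or the star case it is needed for, and without both the limit-stage verification -- and hence the whole recursion -- does not go through.
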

\noindent The converse of this is false in general. To see this consider the digraph $D=(\omega_1,E)$ with $E= \{\, (\alpha ,\beta) \mid \alpha <\beta\, \}$ and $U=V(D)$. Here $\omega_1$ denotes the first uncountable ordinal. On the one hand, no infinite subset of $\omega_1$ is dispersed, so $\omega_1$ cannot be written as a countable union of dispersed sets. On the other hand, the spanning arborescence that consists of all the edges with tail $0$ is normal in $D$.  

However, the converse of Theorem~\ref{thm: normal arborescence existence} holds in an important case, namely if the digraph $D$ is solid. Indeed, if $D$ is solid then any arborescence $T\subseteq D$ that is normal in $D$ is locally finite by the separation properties of normal arborescences, Lemma~\ref{lemma: separation props of NSA}. Hence the levels of $T$ are finite; in particular, dispersed.

An analogue of Theorem~\ref{thm: normal arborescence existence} holds for reverse normal arborescences:
\begin{cor}
Let $D$ be any digraph and suppose that $U\subseteq V(D)$ is a countable union of dispersed sets in $\Dv$. If $r\in V(D)$ can be reached by all the vertices in $U$, then $D$ has a reverse normal arborescence that contains $U$ and is rooted in $r$.
\end{cor}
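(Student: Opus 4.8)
The plan is to derive this corollary directly from Theorem~\ref{thm: normal arborescence existence} by passing to the reverse digraph $\Dv$, exactly as the reverse corollary to Theorem~\ref{thm: normal arborescences are end-faithful} was obtained from that theorem. The point is that dispersedness, reachability, and normality of arborescences all dualise cleanly under reversal, and that taking the reverse of a digraph twice returns the original digraph.

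First I would translate the two hypotheses of Theorem~\ref{thm: normal arborescence existence} into statements about $\Dv$. By assumption $U$ is a countable union of dispersed sets in $\Dv$, so this hypothesis holds verbatim for $\Dv$. For the reachability condition I claim that $r$ can reach every vertex of $U$ in $\Dv$: indeed, by hypothesis every $u\in U$ can reach $r$ in $D$, so $D$ contains a $u$--$r$ path, and reversing this path edge by edge yields an $r$--$u$ path in $\Dv$, as required.

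With both hypotheses in place, I would apply Theorem~\ref{thm: normal arborescence existence} to the digraph $\Dv$, the vertex set $U$, and the vertex $r$. This yields a normal arborescence $S\subseteq \Dv$ that is rooted in $r$ and contains $U$. Let $T$ be the reverse arborescence obtained from $S$ by reversing each of its edges, so that $\Tv=S$. Since every edge $vw$ of $S$ is an edge of $\Dv$ and hence arises from an edge $wv$ of $D$, the digraph $T$ is a subdigraph of $D$; moreover $T$ is a reverse arborescence rooted in $r$ and, having the same vertex set as $S$, it contains $U$.

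It then remains to confirm that $T$ is reverse normal in $D$. By definition, a reverse arborescence $T$ is normal in $D$ exactly when $\Tv$ is normal in $\Dv$, and here $\Tv=S$ is normal in $\Dv$ by the choice of $S$; hence $T$ is a reverse normal arborescence of $D$ containing $U$ and rooted in $r$, as desired. I do not expect a genuine obstacle, since the argument is a pure dualisation; the only points that need care are the bookkeeping of edge directions under reversal---so that the reachability hypothesis transfers correctly---and the identity that $D$ is the reverse of $\Dv$.
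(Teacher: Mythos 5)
Your proposal is correct and takes exactly the same route as the paper, which simply applies Theorem~\ref{thm: normal arborescence existence} to the reverse of $D$; you have merely spelled out the routine dualisation of the hypotheses and the definition of reverse normality that the paper leaves implicit. No gaps.
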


\begin{proof}
Apply Theorem~\ref{thm: normal arborescence existence} to the reverse of $D$.
\end{proof}

\begin{proof}[Proof of Theorem~\ref{thm: normal arborescence existence}] Suppose that the vertex set $U$ can be written as a countable union $\bigcup \{ \,U_n\mid n\in\N\,\}$ of sets that are dispersed in $D$. Then we can write $U$ as a collection $\{\, u_\alpha \mid \alpha <\kappa\}$ for a finite or limit ordinal $\kappa$ such that every proper initial segment of the collection is dispersed in $D$ as follows: We may assume that the $U_n$ are pairwise disjoint. Choose a well-ordering ${\le_n}$ of every $U_n$. Then write $u\le u'$ for vertices $u\in U_m$ and $u'\in U_n$ with $m<n$, or with $m=n$ and $u\le_m u'$. 
It is straightforward to show that $\le$ defines a well-ordering of $U$ that is as desired. 

We may assume that for every limit ordinal $\alpha<\kappa$ the vertex $u_\alpha$ coincides with some $u_\xi$ with $\xi<\alpha$; indeed, just increment the subscripts of the $u_\alpha$ by one for $\alpha$ an infinite ordinal, and recursively redefine $u_\alpha$ to be some $u_\xi$ with $\xi<\alpha$ for $\alpha$ a limit ordinal. 

Now, we recursively define ascending sequences $(T_\alpha)_{\alpha<\kappa}$ and $(\preceq_\alpha)_{\alpha<\kappa}$ such that $T_\alpha$ is an arborescence and $\preceq_\alpha$ is a \lno\ of $T_\alpha$ that satisfies the following conditions: 
\begin{enumerate}
    \item $T_\alpha$ contains  $\{\,u_{\xi}\mid \xi\le\alpha\,\}$ cofinally\footnote{A subset $B$ of a poset $A= (A,\le)$ is \emph{cofinal} in  $A$ if for every $a \in A$ there is a $b \in B$ with $a \leq b$.}  with regard to $\leq_{T_\alpha}$;
    \item if $v,w\in T_\alpha$  with $v\preceq_\alpha w$ are distinct and have a common $\le_{T_\alpha}$-predecessor, then $w\in T_\xi$ and $v\notin T_\xi$ for some $\xi<\alpha$; 
    \item there is no infinite strictly ascending sequence of vertices in $T_\alpha$ with regard to  $\preceq_\alpha$.
\end{enumerate}
Once the $T_\alpha$ are defined the arborescence $T:=\bigcup\{\, T_\alpha\mid \alpha<\kappa\,\}$ is as desired; indeed, $\bigcup \{\, \preceq_\alpha \mid \alpha<\kappa \}$ is a \lno\ on $T$ and thus $T$ is normal in $D$ by Lemma~\ref{lemma: normal order and linear normal order}. Finally, $V(T)$ contains $U$ by condition (i).  

Conditions (ii) and (iii) become relevant in the construction of the $T_\alpha$, which now follows. If $\alpha=0$, then let $T_0$ be any $r$--$u_0$ path in $D$ and let $\preceq_0:=\le_{T_0}$. Otherwise $\beta>0$. If $\beta$ is a limit ordinal, then let $T_{\beta}:= \bigcup \{\, T_\alpha \mid \alpha<\beta\,\}$ and $\preceq_\beta=\bigcup \{\, \preceq_\alpha \mid \alpha<\beta \}$. Then $\preceq_\beta$ is a \lno\ on $T_\beta$ as each $\preceq_\alpha$ with $\alpha<\beta$ is a \lno\ on $T_\alpha$.  
Condition (i) for $\beta$ follows from (i) for $\alpha<\beta$ and our assumption that $u_\beta$ coincides with $u_\alpha$ for some $\alpha<\beta$. Similarly, condition (ii) for $\beta$ follows from (ii) for $\alpha<\beta$. Condition (iii) can be seen as follows. Suppose for a contraction that there is an infinite strictly ascending sequence $(w_n)_{n\in \N}$ in $T_\beta$ with regard to $\preceq_\beta$. Apply the star-comb lemma to the set $\{\,w_n\mid n\in \N\,\}$ in the undirected tree underlying $T_{\beta}$. The return is an infinite subdivided undirected star since an undirected comb would give rise to a directed comb in $T_{\beta}$ with all its teeth in $U$; here we use that by (i) every tooth has a vertex of $U$ in its up-closure and that every proper initial segment of $\{\, u_\alpha \mid \alpha < \kappa \} $ is dispersed. Let $Z$ be the set of $\le_{T_{\beta}}$-up-neighbours of the centre of the subdivided star that contain a tooth in their $\le_{T_{\beta}}$-up-closure. Since $\preceq_\beta$ is branch sensitive we may write $Z$ as a strictly ascending collection $Z=\{\,z_n\mid n\in \N\,\}$ with regard to $\preceq_\beta$. Choose $z^*\in Z\cap V(T_\alpha)$ so that $\alpha$ is minimal with $Z\cap V(T_\alpha) \neq \emptyset$. By (ii) we have that $z_n\preceq_\beta z^*$ for every $z_n\neq z^*$ contradicting that the $z_n$ form a strictly ascending sequence with regard to $\preceq_\beta$.

Now, suppose that $\beta =\alpha+1$ is a successor ordinal.  If $T_{\alpha}$ already contains $u_{\beta}$, we let $T_{\beta}:= T_{\alpha}$. Otherwise $u_{\beta}$ is not contained in $T_{\alpha}$. As $r$ can reach $u_{\beta}$ there is a $T_{\alpha}$--$u_{\beta}$ path $P$. By (iii) for $\alpha$, we may choose $P$ such that its first vertex $v_P$ is $\preceq_{\beta}$-maximal among all the starting vertices of $T_\alpha$--$u_\beta$ paths. We let $T_{\beta}:=T_{\alpha}\cup P$. Note that this ensures condition (i) for $T_\beta$.

In order to define $\preceq_\beta$ we only need to describe how the vertices from $\mathring{v}_P P$ relate to the vertices in $T_{\alpha}$. 
We define vertices of $ P-v_P$ to be smaller than all the vertices larger than $v_P$ and larger than all others (with regard to the normal order of $T_{\alpha}$). 
Note that this ensures (ii). Condition (iii) holds because there is no infinite strictly ascending sequence of vertices in $T_\alpha$ with regard to $\preceq_\alpha$ and $T_\beta$ extends $T_\alpha$ finitely. 

It remains to show that $\preceq_\beta$ is a \lno\ on $T_\beta$. That $\preceq_\beta$ is branch sensitive is immediate from the construction so let us prove that it is path sensitive. Suppose for a contradiction that $Q$ is a $T_{\beta}$-path from $w$ to $v$ with $\le_{T_{\beta}}$-incomparable vertices $v\preceq_{\beta} w$. Since $T_{\alpha}$ is normal either $v$ or $w$ are contained in $P-v_P$. If $w\in P-v_P$, then $v_P P w Qv $ is a path violating that $\preceq_\alpha$ is path sensitive; unless $v_P$ and $v$ are $\le_{T_\beta}$ comparable, but then we would have $w\preceq_{\beta} v$ by the definition of $\preceq_\beta$. In the other case, the path $wQ vPu_{\beta}$ would have been a better choice for~$P$. 
\end{proof}

\bibliographystyle{amsplain}
\bibliography{bibliography.bib}
\end{document}